\documentclass[onefignum,onetabnum]{siamart171218}

\newcommand{\specialcell}[2][c]{\begin{tabular}[#1]{@{}c@{}}#2\end{tabular}}
\usepackage{xmpmulti}

\usepackage{booktabs, arydshln}

\usepackage{lipsum}
\usepackage{amsfonts}
\usepackage{bbm}
\usepackage{graphicx}
\usepackage{epstopdf}
\usepackage{IEEEtrantools}
\usepackage{algorithmic}
\ifpdf
  \DeclareGraphicsExtensions{.eps,.pdf,.png,.jpg}
\else
  \DeclareGraphicsExtensions{.eps}
\fi


\newcommand{\var}{\textrm{Var}}
\newsiamremark{remark}{Remark}
\newsiamremark{hypothesis}{Hypothesis}
\crefname{hypothesis}{Hypothesis}{Hypotheses}
\newsiamthm{claim}{Claim}

\headers{Optimization of quasi-convex function over product measure sets}{J. Stenger, F.Gamboa, M. Keller}

\title{Optimization of quasi-convex function over product measure sets}

\author{J\'er\^ome Stenger\footnotemark[3] \thanks{IMT, Toulouse (\email{fabrice.gamboa@math.univ-toulouse.fr}).}
\and Fabrice Gamboa\footnotemark[1] \thanks{ANITI - Artificial and Natural Intelligence Toulouse Institute.}
\and Merlin Keller\thanks{EDF R$\&$D, Chatou (\email{jerome.stenger@edf.fr}, \email{merlin.keller@edf.fr}).}}

\usepackage{amsopn}


\usepackage{caption}

\ifpdf
\hypersetup{
  pdftitle={Optimization of quasi-convex functions over product measures sets},
  pdfauthor={J. Stenger, F. Gamboa, M. Keller}
}
\fi




\begin{document}

\maketitle

\begin{abstract}
  We consider a generalization of the Bauer maximum principle. We work with tensorial products of convex measures sets, that are non necessarily compact but generated by their extreme points. We show that the maximum of a quasi-convex lower semicontinuous function on this product space is reached on the tensorial product of finite mixtures of extreme points. Our work is an extension of the Bauer maximum principle in three different aspects. First, we only assume that the objective functional is quasi-convex. Secondly, the optimization is performed over a space built as a product of measures sets. Finally, the usual compactness assumption is replaced with the existence of an integral representation on the extreme points. 
  We focus on the product of two different types of measure sets, called the moment class and the unimodal moment classes. The elements of these classes are probability measures (respectively unimodal probability measures) satisfying \textit{generalized} moment constraints. We show that an integral representation on the extreme points is available for such spaces and that it extends to their tensorial product.   
  We give several applications of the Theorem, going from robust Bayesian analysis to the optimization of a quantile of a computer code output. 
 
\end{abstract}

\begin{keywords}
  Quasi-convexity, Lower semicontinuity, Optimization, Product space, Measure space
\end{keywords}

\begin{AMS}
  46E27, 60B11, 62P30, 52A40
\end{AMS}

\section{Introduction}
Optimization of convex functions is one of the most studied topic in optimization theory. Indeed, their properties are really interesting especially for minimum search. However, convex functions are also attractive for maximum search. On this matter, the famous Bauer maximum principle \cite{choquet_lectures_1969, kruzik_bauers_2000, alfsen_compact_2012} states that a convex upper semicontinuous function, defined on a compact convex subset of a locally convex topological vector space, reaches its maximum on some extreme points. However, in practice the functions to optimize are rarely convex, and quasi-convex functions are a well-tailored generalization for optimization.
In this paper, we are interested in such quasi-convex functions \cite{penot_quasiconvex_1990}. They are defined on a convex subset $\mathcal{A}$ of a topological vector space, as functions satisfying the inequality $f(\lambda x + (1-\lambda)y)\leq \max\{ f(x), f(y)\}$ for all $x,y\in\mathcal{A}$ and $\lambda\in[0,1]$. Many properties of convex functions have equivalent for quasi-convex functions, we refer to the excellent review on quasi-convex function in \cite{greenberg_review_1971} where a non-exhaustive list is provided. For instance, the Bauer maximum principle remains true for quasi-convex functions. What is more surprising is that the proof of this claim available in \cite[p.102]{choquet_lectures_1969} is similar in all respects to the one for convex functions. 

In this paper, we study a quasi-convex lower semicontinuous function (meaning that $\{ x\in\mathcal{A}: f(x) \leq \alpha\}$ is a closed and convex set for all $\alpha\in\mathbb{R}$) and its optimization on a product space. To our knowledge, this has not been addressed before. Our work is an extension of \cite{owhadi_optimal_2013}, where the authors study the optimization of an affine function on a product of measure spaces with moment constraints. Although our theoretical approach deals with general topological spaces, we focus in our applications on convex sets $\mathcal{A}_1, \dots, \mathcal{A}_d$ of probability measures. We aim to optimize a quasi-convex lower semicontinuous (\textit{lsc}) functional over the product space $\prod_{i=1}^d \mathcal{A}_i$.  
The product of measure sets, also called tensorization, is one of the key elements of this framework. As we will see, it suits numerous industrial optimization problems that will be discussed in this paper.

The strength of our approach is that contrary to the Bauer maximum principle, we do not assume compactness. Instead of the compactness assumption of the optimization sets $\mathcal{A}_i$ for $1\leq i\leq d$, we assume the existence of an integral representation \cite{lukes_integral_2009} by a subset $\Delta_i$ of $\mathcal{A}_i$. This means that every measure $\mu$ in $\mathcal{A}_i$ is the barycenter of a measure $\boldsymbol{\nu}$ supported on $\Delta_{i}$, such that for any linear function $\phi$ of the topological dual of $\mathcal{A}_i$ it holds
\[\langle \phi, \mu\rangle :=\int_X \phi(x)\ \mu(dx) = \int_{\Delta_{i}} \langle \phi , s\rangle \,d\boldsymbol{\nu}(s)\ , \] where $X$ denotes the support of $\mu$. The bold type indicates that $\boldsymbol{\nu}$ is a probability measure supported by a set of probability measures.
The compact case is also included in this framework as the Choquet representation holds. More precisely, every point of a compact convex set is the barycenter of a probability measure carried by every bordering set (see \cite[Theorem 27.6]{choquet_lectures_1969} for details, note that the representation is supported on the set of extreme points if the space is also metrizable \cite[p.140]{choquet_lectures_1969}). The existence of the integral representation is a strong assumption. However, we will provide two different measure spaces for which the integral representation holds; namely the moment class \cite{winkler_extreme_1988, weizsacker_integral_1979} and the unimodal moment class \cite{bertin_unimodality_1997}. 

Our main theorem is therefore an extension of the Bauer maximum principle in three directions: quasi-convexity of the optimization function replaces convexity, tensorization generalizes the structure of the optimization space, and the existence of an integral representation on the marginal sets covers the compact case. By doing so, we build a framework that includes many optimization procedures developed earlier. We refer for example to robust Bayesian analysis \cite{ruggeri_robust_2005, berger_robust_1990, sivaganesan_ranges_1989},  that studies the sensitivity of the Bayesian analysis to the choice of an uncertain prior distribution. Another example is the work in \cite{owhadi_optimal_2013} called Optimal Uncertainty Quantification. Further, we present new applications, all illustrated on a toy case. The theoretical approach is made as general as possible, while the proofs of our claims only rely on simple topological arguments.

The paper is organized as follows. \Cref{sec: Notation and preliminaries} provides the framework basis before introducing our main result in \Cref{sec: Main results}. \Cref{sec: applications} is dedicated to the presentation of some applications that are illustrated in \Cref{sec: case study} on a use case. \Cref{sec: proofs} provides the theoretical formulation and proofs of our results. We discuss in the last section conclusion and perspectives of this work. 

\section{Measure spaces}

\label{sec: Notation and preliminaries}
We will work with a subset of $\mathcal{P}(X)$, the set of all Borel probability measures on a topological space $X$ (specified in the following). Let $C_b(X)$ denote the set of all continuous bounded real valued functions on $X$. We deal with a convex subset of $\mathcal{P}(X)$ satisfying the integral representation property. Note that generally speaking, $\mathcal{P}(X)$ can be considered as a subset of the closed unit ball of the topological dual of $C_b(X)$ and it inherits its topology, which is the topology of weak$^*$ convergence. Moreover, the weak$^*$ topology is always locally convex, since it is induced by the seminorms $\mu\mapsto |\langle\phi, \mu\rangle|$, where $\phi\in C_b(X)$, and $\mu$ belongs to the topological dual of $C_b(X)$.

\subsection{Moment class}
\label{subsec: moment class}
Assume now that $X$ is a Suslin space \cite{Bourles_2018}, i.e. the image of a Polish space under a continuous mapping. We study a convex subspace of $\mathcal{P}(X)$, called the moment class. All measures in the moment class $\mathcal{A}^*$ satisfy \textit{generalized} moment constraints. That is, a measure $\mu\in\mathcal{A}^*$ verifies $\mathbb{E}_{\mu} [\varphi_i] \leq 0$, where $\mathbb{E}_{\mu} [\varphi_i]$ denotes the expectation of $\varphi_i$ with respect to $\mu$, for given measurable functions $\varphi_1,\dots, \varphi_n\in C_b(X)$. Because $X$ is Suslin, all measures $\mu\in\mathcal{P}(X)$ are regular. Hence, the following Theorem \ref{th: extreme points of moments set} due to Winkler \cite[p.586]{winkler_extreme_1988} holds. 
\begin{theorem}[Extreme points of moment class]
  \label{th: extreme points of moments set}
  Consider the space $\mathcal{P}(X)$ of Borel measures on a Suslin space $X$, and measurable functions $\varphi_1, \dots, \varphi_n$ on $X$. Then, for any measure $\mu$ in the moment class $\mathcal{A}^\ast = \{ \mu\in\mathcal{P}(X) \ | \  \mathbb{E}_{\mu} [\varphi_i] \leq 0, 1\leq i\leq n \}$, there exists a probability measure $\boldsymbol{\nu}$ supported on $\Delta^\ast(n)$ such that $\mu$ is the barycenter of $\boldsymbol{\nu}$. Where 
  \[ \Delta^{\ast}({n}) = \left\lbrace \mu\in\mathcal{A}^\ast \, : \ \mu = \sum_{i=1}^{n+1} \omega_i \delta_{x_i},\, \omega_i \geq 0,\, x_i\in X \right\rbrace \ .\]
  is the set of discrete probability measures of $\mathcal{A}^\ast$ supported on at most $n+1$ points.
\end{theorem}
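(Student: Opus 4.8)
The plan is to prove the statement in two essentially independent parts. Part (A): identify the extreme points of the moment class, showing that $\mathrm{ext}(\mathcal{A}^\ast)\subseteq\Delta^\ast(n)$. Part (B): establish a Choquet-type integral representation — every $\mu\in\mathcal{A}^\ast$ is the barycenter of a probability measure $\boldsymbol{\nu}$ carried by $\mathrm{ext}(\mathcal{A}^\ast)$ — after which that $\boldsymbol{\nu}$ is automatically carried by $\Delta^\ast(n)$ by Part (A). Part (A) is elementary convex analysis; Part (B) is where the lack of any compactness hypothesis on $\mathcal{A}^\ast$ has to be compensated, and it is the main obstacle, since integral representations over extreme points fail for arbitrary non-compact closed convex sets. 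What saves the day is the special structure here: only finitely many moment constraints, and $X$ Suslin (hence every $\mu\in\mathcal{P}(X)$ is regular).

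For Part (A) I would take $\mu\in\mathrm{ext}(\mathcal{A}^\ast)$ and let $J=\{\,i:\mathbb{E}_\mu[\varphi_i]=0\,\}$ be the active constraints, $k=|J|\leq n$. If $\mu$ were not a convex combination of at most $k+1$ Dirac masses, then $\dim L^\infty(X,\mu)>k+1$ (a purely atomic measure on $m$ atoms has $\dim L^\infty(X,\mu)=m$, and any diffuse part forces infinite dimension), so the linear map $g\mapsto\left(\int g\,d\mu,\ \left(\int g\,\varphi_i\,d\mu\right)_{i\in J}\right)\in\mathbb{R}^{k+1}$ has a nontrivial kernel; pick $g$ in it with $g\neq 0$ and $\|g\|_\infty\leq 1$. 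For small $0<\varepsilon<1$ set $d\mu^{\pm}=(1\pm\varepsilon g)\,d\mu$: these are probability measures, they meet the active constraints with equality, and since the inactive constraints hold strictly at $\mu$ they survive for $\mu^{\pm}$ once $\varepsilon$ is small enough. Then $\mu^{\pm}\in\mathcal{A}^\ast$, $\mu^{+}\neq\mu^{-}$ and $\mu=\frac{1}{2}(\mu^{+}+\mu^{-})$, contradicting extremality. Hence $\mu$ is a convex combination of at most $k+1\leq n+1$ Dirac masses, i.e. $\mu\in\Delta^\ast(n)$.

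For Part (B) I would first check that $\Delta^\ast(n)$ is a legitimate carrier for $\boldsymbol{\nu}$: it is the image of $\{\,((\omega_i)_{i\leq n+1},(x_i)_{i\leq n+1}):\omega_i\geq 0,\ \sum_i\omega_i=1,\ \sum_i\omega_i\varphi_j(x_i)\leq 0\ \forall j\,\}$ under the weak$^\ast$-continuous map $((\omega_i),(x_i))\mapsto\sum_i\omega_i\delta_{x_i}$; since $X$ is Suslin this domain is analytic and therefore so is $\Delta^\ast(n)$, which is thus universally measurable in $\mathcal{P}(X)$. The representing measure itself cannot come from the Bauer/Choquet principle because $\mathcal{A}^\ast$ need not be weak$^\ast$ compact; instead one invokes the integral representation theorem for generalized moment problems (\cite{weizsacker_integral_1979}, \cite{winkler_extreme_1988}), whose proof uses the finiteness of $n$ to bypass compactness — for instance by induction on $n$, at each stage either observing that $\mu$ already lies in $\Delta^\ast(n)$ or disintegrating $\mu$ over a level set of one of the $\varphi_i$ to drop into a proper face governed by fewer constraints; alternatively by pushing $\mathcal{A}^\ast$ forward through the moment map $\mu\mapsto(\mathbb{E}_\mu[\varphi_1],\dots,\mathbb{E}_\mu[\varphi_n])$ onto a convex subset of $\mathbb{R}^n$, splitting the image point via Carath\'eodory's theorem, and lifting that finite decomposition back through a measurable disintegration of $\mathcal{P}(X)$ along the map. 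By Part (A) the resulting $\boldsymbol{\nu}$ is then carried by $\mathrm{ext}(\mathcal{A}^\ast)\subseteq\Delta^\ast(n)$, which is the assertion. I expect this lifting/representation step in the non-compact setting to be the only genuinely delicate point; Part (A) and the measurability check are routine.
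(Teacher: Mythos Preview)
The paper does not actually prove this theorem: it attributes the result directly to Winkler \cite[p.~586]{winkler_extreme_1988} and moves on. The closest thing to a ``paper's own proof'' is the appendix argument for the unimodal analogue (Theorem~\ref{th: extreme points of unimodal set}), which follows exactly the two-part structure you propose: first classify the extreme points, then invoke the von Weizs\"acker--Winkler integral representation \cite{weizsacker_integral_1979} to obtain the barycentric measure on $\Delta^\ast(n)$.

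Your proposal is correct, and the overall architecture matches. The only notable difference is in Part~(A): the paper (via Winkler's Proposition~2.1) handles the extreme-point classification abstractly, by showing the ambient set is a Choquet simplex, checking linear compactness through a lattice-cone argument \`a la Kendall, and then reading off that extreme points of $F^{-1}[W]$ are convex combinations of at most $n+1$ extreme points of the simplex. You instead do the perturbation argument by hand---dimension-count in $L^\infty(X,\mu)$, pick $g$ in the kernel of the moment map, split $\mu$ as $\tfrac12(\mu^+ + \mu^-)$. This is the concrete engine underlying Winkler's abstract statement, so the two are equivalent; yours is more self-contained and elementary, the paper's route is shorter but imports more machinery. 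Your Part~(B) correctly identifies that compactness is absent and that the representation must come from \cite{weizsacker_integral_1979,winkler_extreme_1988} rather than Choquet's theorem, which is precisely what the paper does as well.
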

The case of equality in the constraints defining $\mathcal{A}^\ast$ is covered by this result \cite[p.586]{winkler_extreme_1988}.
Theorem \ref{th: extreme points of moments set} states that the extreme points of a class of measures with $n$ \textit{generalized} moment constraints are discrete measures supported on at most $n+1$ points of $X$.

\subsection{Unimodal moment class}
\label{subsec: unimodal class}
In this section, $X$ denotes an interval of the real line $\mathbb{R}$. Let $\mu$ be a probability distribution on $X$, and let $F_\mu$ be its distribution function, that is $F_\mu(h):= \mu(]-\infty,h])$. The measure $\mu$ is said to be unimodal with mode at $a$, if $F_\mu$ is convex on $]-\infty, a[$ and concave on $]a,+\infty[$. We denote $\mathcal{H}_a(X)$ the set of all probability measures on $X$ which are unimodal at $a$. The set $\mathcal{H}_a(X)$ is closed but not necessarily compact ($\mathcal{H}_a(\mathbb{R})$ is not compact, see \cite[p.19]{bertin_unimodality_1997}). Clearly, any uniform probability measure on an interval of the form $ \text{co}({a,z}),\, z\in X$ (co is the convex hull) including the Dirac mass in $a$, is unimodal at $a$. The set $\mathcal{U}_a(X) = \{ \mathfrak{u}\text{ is uniformly distributed on co}(a,z), z\in X \}$ of these uniform probability measures is closed in $\mathcal{P}(X)$ \cite[p.19]{bertin_unimodality_1997}. In this section, we are interested by the convex subset $\mathcal{A}^\dagger$ of unimodal measures satisfying \textit{generalized} moment constraints $\mathbb{E}_\mu[\varphi_i]\leq 0$, for measurable functions $\varphi_1,\dots, \varphi_n$. This subspace is called a unimodal moment class and an equivalent of Theorem \ref{th: extreme points of moments set} holds:
\begin{theorem}[Extreme points of unimodal class]
  \label{th: extreme points of unimodal set}
  Consider the space $\mathcal{H}_a(X)$ of unimodal measures on an interval $X$ with mode $a$, and measurable functions $\varphi_1, \dots, \varphi_n$ on $X$. Then, for any measure $\mu$ in the unimodal moment class $\mathcal{A}^\dagger = \{ \mu\in\mathcal{H}_a(X) \ | \ \allowbreak \mathbb{E}_{\mu} [\varphi_i] \leq 0, 1\leq i\leq n \}$, there exists a probability measure $\boldsymbol{\nu}$ supported on $\Delta^\dagger({n})$ such that $\mu$ is the barycenter of $\boldsymbol{\nu}$. Here
  \[ \Delta^{\dagger}({n}) = \left\lbrace \mu \in\mathcal{A}^\dagger \ | \ \mu = \sum_{i=1}^{n+1} \omega_i \mathfrak{u}_i,\, \omega_i \geq 0,\, \mathfrak{u}_i\in \mathcal{U}_a(X) \right\rbrace \ .\]
  Elements of $\Delta^\dagger(n)$ are mixtures of at most $n+1$ uniform distributions supported on $\textnormal{co}({a,z})$ for some $z\in X$.
\end{theorem}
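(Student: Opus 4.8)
The plan is to reduce Theorem~\ref{th: extreme points of unimodal set} to Theorem~\ref{th: extreme points of moments set} via the Khinchin representation of unimodal measures. Introduce the \emph{mixing map} $T\colon\mathcal{P}(X)\to\mathcal{P}(X)$ defined by $T\lambda:=\int_X\mathfrak{u}_z\,\lambda(dz)$, where $\mathfrak{u}_z$ is the uniform law on $\mathrm{co}(a,z)$ (so $\mathfrak{u}_a=\delta_a$, and $\{\mathfrak{u}_z:z\in X\}=\mathcal{U}_a(X)$). The first step is to record three properties of $T$. \textbf{(a)} $T$ is affine. \textbf{(b)} $T$ maps $\mathcal{P}(X)$ \emph{onto} $\mathcal{H}_a(X)$: any mixture of uniforms of the above form is unimodal at $a$, and conversely every measure unimodal at $a$ admits such a representation --- this is Khinchin's theorem, see \cite{bertin_unimodality_1997}. \textbf{(c)} For every $\phi\in C_b(X)$ the function $\tilde\phi(z):=\mathbb{E}_{\mathfrak{u}_z}[\phi]=\tfrac1{z-a}\int_a^z\phi(t)\,dt$ (with $\tilde\phi(a):=\phi(a)$) again lies in $C_b(X)$, and by Fubini $\mathbb{E}_{T\lambda}[\phi]=\mathbb{E}_\lambda[\tilde\phi]$; in particular $T$ is weak$^\ast$-continuous, and the same identity for bounded measurable $\phi$ shows that the transformed functions $\tilde\varphi_1,\dots,\tilde\varphi_n$ are measurable.

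With this in hand, introduce the moment class $\mathcal{B}:=\{\lambda\in\mathcal{P}(X):\mathbb{E}_\lambda[\tilde\varphi_i]\le 0,\ 1\le i\le n\}$ on $X$, with the $\tilde\varphi_i$ as in \textbf{(c)}. By \textbf{(b)} and \textbf{(c)}, $T$ restricts to a surjection from $\mathcal{B}$ onto $\mathcal{A}^\dagger$, and it sends the discrete measure $\sum_{i=1}^{n+1}\omega_i\delta_{z_i}\in\mathcal{B}$ to $\sum_{i=1}^{n+1}\omega_i\mathfrak{u}_{z_i}\in\Delta^\dagger(n)$. Since $X$, being an interval of $\mathbb{R}$, is Polish and hence Suslin, Theorem~\ref{th: extreme points of moments set} applies to $\mathcal{B}$: given $\mu\in\mathcal{A}^\dagger$, choose $\lambda\in\mathcal{B}$ with $T\lambda=\mu$, and let $\boldsymbol{\nu}'$ be a probability measure supported on $\Delta^\ast(n)$ (for the class $\mathcal{B}$) whose barycenter is $\lambda$.

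The last step is to push $\boldsymbol{\nu}'$ forward through $T$. Set $\boldsymbol{\nu}:=T_\ast\boldsymbol{\nu}'$, a probability measure on $\mathcal{P}(X)$; since $T$ maps the support of $\boldsymbol{\nu}'$ into $\Delta^\dagger(n)$, $\boldsymbol{\nu}$ is supported on $\Delta^\dagger(n)$. To see that $\mu$ is the barycenter of $\boldsymbol{\nu}$, fix $\phi\in C_b(X)$ and compute, using \textbf{(c)}, the barycenter property of $\boldsymbol{\nu}'$ (valid against $\tilde\phi\in C_b(X)$), and \textbf{(c)} once more under the integral:
\[
\langle\phi,\mu\rangle=\langle\phi,T\lambda\rangle=\langle\tilde\phi,\lambda\rangle=\int\langle\tilde\phi,s\rangle\,d\boldsymbol{\nu}'(s)=\int\langle\phi,Ts\rangle\,d\boldsymbol{\nu}'(s)=\int_{\Delta^\dagger(n)}\langle\phi,t\rangle\,d\boldsymbol{\nu}(t).
\]
Hence $\mu$ is the barycenter of $\boldsymbol{\nu}$, which is exactly the assertion. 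The case of equality in the constraints is inherited from the corresponding statement for Theorem~\ref{th: extreme points of moments set}.

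The main obstacle is the bookkeeping around the mode $a$: one must justify that $T$ is onto $\mathcal{H}_a(X)$ in the possibly unbounded interval $X$ (Khinchin's decomposition) and that the curve $z\mapsto\mathfrak{u}_z$ and the transformed test functions $\tilde\phi$ behave well at $z=a$, where the naive formula $\tfrac1{z-a}\int_a^z$ is a $0/0$ expression; the remaining verifications (measurability and boundedness of the $\tilde\varphi_i$, Fubini) are routine. Everything else is a transport of Theorem~\ref{th: extreme points of moments set} along the affine, continuity- and $C_b$-preserving map $T$.
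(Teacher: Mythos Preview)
Your proof is correct and takes a genuinely different route from the paper's. The paper argues directly on $\mathcal{H}_a(X)$: it invokes the simplex structure of $\mathcal{H}_a(X)$ from \cite{bertin_unimodality_1997}, upgrades this to a Choquet simplex via \cite{Winkler_simplices_1985}, verifies linear compactness of the set $K=\{\mu\in\mathcal{H}_a(X):\varphi_i\text{ are }\mu\text{-integrable}\}$ through a hereditary lattice-cone argument and Kendall's theorem \cite{kendall_simplexes_1962}, then applies Winkler's Proposition~2.1 to locate $\mathrm{ex}(\mathcal{A}^\dagger)\subset\Delta^\dagger(n)$, and finally cites the von Weizs\"acker--Winkler integral representation \cite{weizsacker_integral_1979}. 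You bypass all of this Choquet-theoretic machinery by pulling the problem back through the Khinchin map $T$ to an ordinary moment class $\mathcal{B}\subset\mathcal{P}(X)$, invoking Theorem~\ref{th: extreme points of moments set} there, and pushing the representing measure forward. Your approach is more economical and exhibits Theorem~\ref{th: extreme points of unimodal set} as a genuine corollary of Theorem~\ref{th: extreme points of moments set} rather than a parallel result; the paper's approach, on the other hand, does not require the transformed constraints $\tilde\varphi_i$ to be well-defined everywhere (a mild issue if the $\varphi_i$ are merely measurable and not locally integrable). One small point worth making explicit in your write-up: the weak$^*$-continuity of $T$ established in \textbf{(c)} is precisely what guarantees that the pushforward $T_*\boldsymbol{\nu}'$ is a well-defined Borel probability measure on $\mathcal{P}(X)$.
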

The proof of this theorem is postponed to the Appendix. The unimodal class was first explored by Khinchin \cite{khintchine1938unimodal} who revealed the fundamental relationship between the set of unimodal probability distributions and uniform probability densities. It was later demonstrated in \cite{bertin_hincspaces_1984} that the Khinchin Theorem may be considered as a non-compact form of the Krein-Milman Theorem \cite[p.105]{choquet_lectures_1969}. In \cite{sivaganesan_ranges_1989}, one can find a first application of this result to the context of robust Bayesian analysis. In the same paper, the class of \textit{symmetric} unimodal distributions with mode $a$ is also considered. They show that the extreme points of this set are mixture of uniform probability measures on an interval of the form $[\theta-z,\theta+z],\, z\geq 0$.

The sets $\mathcal{A}^\dagger$ and $\mathcal{A}^\ast$ are very interesting. Indeed, measure spaces are non-obvious sets and it is generally not straightforward to exhibit their extreme points. 


\section{Main results}
\label{sec: Main results}
\subsection{Construction of the product measure spaces}
\label{subsec: construction of the product measure spaces}
We now give our main Theorem. The measure sets introduced in \Cref{sec: Notation and preliminaries} have very similar properties, so that they are gathered under the same notation. Indeed, we enforce \textit{generalized} moment constraints in both cases. The difference lies in the unimodality of the measures of $\mathcal{A}^\dagger$, while $\mathcal{A}^\ast$ can contain any Radon measure. Consequently, the difference between Theorem \ref{th: extreme points of moments set} and \ref{th: extreme points of unimodal set} lies in the nature of the extreme points. Indeed, the generator of the unimodal moment class $\mathcal{A}^\dagger$ is the set of finite convex combination of uniform distributions. Whereas, the generator of the moment class $\mathcal{A}^\ast$ is the set of finite convex combinations of Dirac masses.

To begin with, we detail the construction of the product space. Let $\mathcal{X}:=X_1\times \dots\times X_d$ be a product of $p$ Suslin spaces $X_1, \dots, X_p$, and $d-p$ real intervals $X_{p+1},\dots, X_d$.
Given real numbers $a_i\in X_i$ for $p< i\leq d$ and measurable functions $\varphi_i^{(j)}: X_i\rightarrow\mathbb{R}$ for $1\leq j \leq N_i$ and $1\leq i\leq d$, we construct $d$ measure spaces with the integral representation property
\begin{IEEEeqnarray*}{lll}
  \mathcal{A}_i = \mathcal{A}_i^\ast= \left\lbrace \mu_i \in\mathcal{P}(X_i)\ | \ \mathbb{E}_{\mu_i}[\varphi_i^{(j)}] \leq 0 \text{ for } j=1,\dots, N_i\right\rbrace & $ for $ 1\leq i\leq p \ ,\\
  \mathcal{A}_i = \mathcal{A}_i^\dagger = \left\lbrace \mu_i \in\mathcal{H}_{a_i}(X_i)\ | \ \mathbb{E}_{\mu_i}[\varphi_i^{(j)}] \leq 0 \text{ for } j=1,\dots, N_i\right\rbrace & $ for $ p< i\leq d \ .
\end{IEEEeqnarray*}
Therefore, the space $\mathcal{A}_i$ is either a moment space on a Suslin space, or a unimodal moment space on an interval as presented in \Cref{sec: Notation and preliminaries}.
We denote by $\Delta_{N_i}\subset \mathcal{A}_i$, the generator of the space $\mathcal{A}_i$, as defined in \Cref{sec: Notation and preliminaries}.  Summarizing, we have
\begin{IEEEeqnarray}{lll}
  \Delta_{i}({N_i}) = \Delta_i^\ast({N_i})=  \left\lbrace \mu_i \in\mathcal{A}_i\ | \ \mu_i = \sum_{k=1}^{N_i+1} \omega_k \delta_{x_k},\, x_k\in X_i \right\rbrace $ for $ 1\leq i\leq p\ ,\\
  \Delta_{i}({N_i}) = \Delta_i^\dagger({N_i})=  \left\lbrace \mu_i \in\mathcal{A}_i\ | \ \mu_i = \sum_{k=1}^{N_i+1} \omega_k \mathfrak{u}_k,\, \mathfrak{u}_k\in\mathcal{U}_{a_i}(X_i),\,\right\rbrace \ $for $ p<i\leq d \ . \nonumber
\end{IEEEeqnarray} 
With these definitions and as discussed in the previous section, any measure $\mu_i \in\mathcal{A}_i$ is the barycenter of a probability measure supported on $\Delta_{i}({N_i})$, the set of convex combination of at most $N_i +1$ Dirac masses or uniform distributions.

In the remaining of the paper, the product spaces $\mathcal{A}=\prod_{i=1}^d \mathcal{A}_i$ and $\Delta = \prod_{i=1}^d \Delta_{i}({N_i})$ are equipped with the product $\sigma$-algebra (not to be confused with the Borel $\sigma$-algebra of the product).

The following definition specifies the meaning of quasi-convexity and lower semicontinuity of a function on a product space.
\begin{definition}
  A function $f: \mathcal{A}\rightarrow \mathbb{R}$ is said to be marginally quasi-convex (marginally \textit{lsc}) if for all $\{\mu_k\in\mathcal{A}_k,\, k\neq i\}$, the function $\mu_i\mapsto f(\mu_1, \dots, \mu_d)$ is quasi convex (respectively \textit{lsc} for the topology of $\mathcal{A}_i$).
\end{definition}
Notice that if $f$ is globally quasi-convex (\textit{lsc} for the product topology) then it is marginally quasi-convex (respectively marginally \textit{lsc}). Indeed, if $f$ is globally \textit{lsc}, then $\{ \mu\in\mathcal{A} \ |\ f(\mu_1, \dots, \mu_d) > \alpha\}$ is open for any $\alpha$ and as the canonical projections are open maps, $\{\mu_i\in\mathcal{A}_i \ | \ f(\mu_1,\dots,\mu_d) > \alpha\}$ is also open. It is clear for quasi-convexity. Having defined properly the product spaces, we give our main result in the next section.

\subsection{Reduction Theorem}
\label{sec: reduction theorem}
We assume that any measure $\mu_i\in \mathcal{A}_i$ belongs to either a moment space or a unimodal moment space. Hence, $\mu_i$ always satisfies $N_i$ moment constraints. In the following theorem (Theorem \ref{th: reduction theorem on product measure space}), we also enforce constraints on the product measure $\mu=\mu_1 \otimes \dots \otimes \mu_d \in\mathcal{A}$, such that, for $N$ measurable functions $\varphi^{(j)}:\mathcal{X}\rightarrow \mathbb{R}$, $1\leq j\leq N$, we have $\mathbb{E}_\mu[\varphi^{(j)}] \leq 0$. In other words, we investigate a subspace of $\mathcal{A}$ defined as $\{\mu\in\mathcal{A}\ | \ \mathbb{E}_\mu[\varphi^{(j)}]\leq 0 , \, 1\leq j\leq N\}$.

\begin{theorem}
  \label{th: reduction theorem on product measure space}
  Suppose that $\mathcal{X}, \mathcal{A}$ and $\Delta$ are defined as in \Cref{subsec: construction of the product measure spaces}. Let $\varphi^{(j)}:\mathcal{X}\rightarrow \mathbb{R},\, 1\leq j\leq N$ be measurable functions. Let $f:\mathcal{A}\rightarrow \mathbb{R}$ be a marginally quasi-convex lower semicontinuous function. Then,
  \[\sup_{\substack{\mu_1,\dots ,\mu_d \in \otimes\mathcal{A}_i \\ \mathbb{E}_{\mu_1,\dots,\mu_d}[\varphi^{(j)}] \leq 0 \\ 1\leq j\leq N}} f(\mu)\quad  = \quad \sup_{\substack{\mu_1,\dots ,\mu_d \in \otimes\Delta_{i}({N_i+N}) \\ \mathbb{E}_{\mu_1,\dots,\mu_d}[\varphi^{(j)}] \leq 0 \\ 1\leq j\leq N}} f(\mu)\]
\end{theorem}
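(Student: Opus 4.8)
The plan is to prove the non‑trivial inequality ``$\le$'' (left‑hand supremum bounded by the right‑hand one); the reverse is immediate, since $\Delta_i(N_i+N)\subseteq\mathcal{A}_i$ for every $i$, so the right‑hand feasible set sits inside the left‑hand one with the same objective and the same product constraints. For the forward direction I would first isolate a \emph{non‑compact quasi‑convex Bauer principle}: if $g$ is a quasi‑convex lower semicontinuous function on a convex set $\mathcal{C}$ lying inside a locally convex space, and a point $x\in\mathcal{C}$ is the barycenter (in the integral‑representation sense) of a probability measure $\boldsymbol{\nu}$ carried by a subset $S\subseteq\mathcal{C}$, then $g(x)\le\sup_{s\in S}g(s)$. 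The proof of this lemma is short: if on the contrary $g(s)\le\beta<g(x)$ for all $s\in S$, then $S$ is contained in the sublevel set $L_\beta=\{g\le\beta\}$, which is convex (quasi‑convexity) and closed (lower semicontinuity); by Hahn--Banach there is a continuous linear functional $\phi$ with $\langle\phi,x\rangle>\gamma\ge\langle\phi,y\rangle$ for all $y\in L_\beta$, and integrating against $\boldsymbol{\nu}$ gives $\langle\phi,x\rangle=\int_S\langle\phi,s\rangle\,d\boldsymbol{\nu}(s)\le\gamma$, a contradiction.

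Second, I would set up a coordinate‑by‑coordinate reduction. Fix a feasible point $\mu=(\mu_1,\dots,\mu_d)$ for the left‑hand supremum and an index $i$, and freeze all coordinates except the $i$‑th. Integrating the $N$ product constraints over the frozen marginals turns them into $N$ \emph{new} generalized moment constraints on $\mu_i$, namely $\mathbb{E}_{\mu_i}[\psi_i^{(j)}]\le 0$ with $\psi_i^{(j)}(x_i)=\int\varphi^{(j)}(x_1,\dots,x_d)\,\prod_{k\neq i}\mu_k(dx_k)$, a measurable function on $X_i$. Hence $\mu_i$ belongs to a moment (resp.\ unimodal moment) class carrying $N_i+N$ constraints, so Theorem~\ref{th: extreme points of moments set} (resp.\ Theorem~\ref{th: extreme points of unimodal set}) applies and exhibits $\mu_i$ as the barycenter of a probability measure carried by $\Delta_i(N_i+N)$ intersected with $\{\mathbb{E}[\psi_i^{(j)}]\le0,\ 1\le j\le N\}$. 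Marginal quasi‑convexity and marginal lower semicontinuity make $\mu_i\mapsto f(\dots,\mu_i,\dots)$ fit the hypotheses of the lemma above, so for every $\varepsilon>0$ there is $\mu_i'\in\Delta_i(N_i+N)$, still satisfying the sliced constraints, with $f(\dots,\mu_i',\dots)\ge f(\dots,\mu_i,\dots)-\varepsilon$; and since $\mu_i'$ meets exactly the sliced constraints, replacing $\mu_i$ by $\mu_i'$ produces a point still feasible for the full product problem.

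Third, I would iterate this replacement over $i=1,\dots,d$, keeping the already‑reduced coordinates frozen at each stage — this is legitimate because the slice functions $\psi_i^{(j)}$ remain measurable no matter which marginals are frozen, including discrete ones — and accumulating at most $d\varepsilon$ of loss. The endpoint is a point of $\prod_{i=1}^d\Delta_i(N_i+N)$ that is feasible for the product constraints and has objective value at least $f(\mu)-d\varepsilon$. Letting $\varepsilon\to0$ and then taking the supremum over all feasible $\mu$ yields the inequality ``$\le$'', hence the claimed equality.

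The step I expect to be the main obstacle is the topological bookkeeping in the lemma: one must know that the sublevel set $L_\beta$ is closed not merely in $\mathcal{A}_i$ but in an ambient locally convex space on which both the Hahn--Banach separation and the integral‑representation pairing operate, which is precisely where the closedness of the moment and unimodal moment classes (\Cref{sec: Notation and preliminaries}) and the lower semicontinuity hypothesis combine. A secondary point to verify is that the sliced constraint functions $\psi_i^{(j)}$, although possibly discontinuous, are still admissible inputs to Winkler's theorem, which only demands measurability — so the reduction never leaves the family of spaces covered by Theorems~\ref{th: extreme points of moments set} and~\ref{th: extreme points of unimodal set}.
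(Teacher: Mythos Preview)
Your proof is correct and follows essentially the same route as the paper: the paper also proves a non-compact Bauer-type principle via Hahn--Banach separation against the closed convex sublevel set (it splits this into two short lemmas, one showing $\mathcal{A}\subset\overline{\textnormal{co}}(\Delta)$ and one showing $\sup_{\overline{\textnormal{co}}(Y)}f=\sup_Y f$), and then performs exactly the same coordinate-by-coordinate reduction in which freezing the other marginals turns the $N$ product constraints into $N$ extra moment constraints on the active coordinate. The only cosmetic difference is that the paper works directly with equalities of suprema at each stage rather than your pointwise $\varepsilon$-extraction, which spares the $d\varepsilon$ bookkeeping but is logically equivalent.
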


In other words, the supremum of a quasi-convex function on a product space can be computed considering only the $d$-fold product of finite convex combinations of extreme points of the marginal spaces. That is, finite convex combinations of either Dirac masses or uniform distributions.

The underlying assumption is that there exists an integral representation on $\mathcal{A}_i$ for $1\leq i \leq d$. We take two examples of measure spaces whose generators are known, but the proofs in \Cref{sec: proofs} are given in a much more general framework. Notice that this assumption is somehow different from the one used in the Bauer maximum principle, wherein the compactness of the convex space is assumed. However, from the Krein-Millman Theorem \cite[p.105]{choquet_lectures_1969}, the compactness assumption implies the integral representation. Further, since we extend the result to product spaces, our framework is more general. The next proposition highlights that the existence of an integral representation on each marginal space, also implies the existence of an integral representation on the product space.
\begin{proposition}
  Let $\mathcal{X},\mathcal{A}$ and $\Delta$ be defined as in \Cref{subsec: construction of the product measure spaces}, such that the integral representation property holds on every marginal space $\mathcal{A}_i$. Then any measure in $\mathcal{A}$ is also the barycenter of a probability measure supported by $\Delta$.
\end{proposition}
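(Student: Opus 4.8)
The plan is to take the representing measure on $\Delta$ to be the product of the representing measures on the marginals. Fix $\mu=(\mu_1,\dots,\mu_d)\in\mathcal{A}$. By hypothesis each $\mathcal{A}_i$ carries an integral representation, so there is a probability measure $\boldsymbol{\nu}_i$ on $\Delta_i(N_i)$ with barycenter $\mu_i$, i.e. $\langle\phi,\mu_i\rangle=\int_{\Delta_i(N_i)}\langle\phi,s\rangle\,d\boldsymbol{\nu}_i(s)$ for every $\phi\in C_b(X_i)$ (recall $\mathcal{A}_i$ sits inside the dual of $C_b(X_i)$, so $\langle\phi,s\rangle=\int\phi\,ds$). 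I would then set $\boldsymbol{\nu}:=\boldsymbol{\nu}_1\otimes\cdots\otimes\boldsymbol{\nu}_d$, a probability measure on $\Delta=\prod_{i=1}^d\Delta_i(N_i)$ equipped with the product $\sigma$-algebra of \Cref{subsec: construction of the product measure spaces}; it is supported by $\Delta$ since each factor $\boldsymbol{\nu}_i$ is supported by $\Delta_i(N_i)$. What remains is to verify that $\mu$ is the barycenter of $\boldsymbol{\nu}$.

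The structural point is that $\mathcal{A}$ carries the product of the weak$^\ast$ topologies, hence a continuous linear functional on the ambient locally convex space is a finite sum of coordinate functionals: any such $\Phi$ acts as $\Phi(\mu_1,\dots,\mu_d)=\sum_{i=1}^d\langle\phi_i,\mu_i\rangle$ for some $\phi_i\in C_b(X_i)$. So it suffices to check the barycenter identity $\langle\Phi,\mu\rangle=\int_\Delta\langle\Phi,s\rangle\,d\boldsymbol{\nu}(s)$ on functionals of this form, and by linearity it is enough to treat a single coordinate functional $s\mapsto\langle\phi_i,s_i\rangle$. For it the integrand depends only on the $i$-th coordinate, is bounded (by $\|\phi_i\|_\infty$, since each $s_i$ is a probability measure) and is measurable for the product $\sigma$-algebra, so Fubini's theorem for a product of probability measures gives $\int_\Delta\langle\phi_i,s_i\rangle\,d\boldsymbol{\nu}(s)=\int_{\Delta_i(N_i)}\langle\phi_i,s\rangle\,d\boldsymbol{\nu}_i(s)=\langle\phi_i,\mu_i\rangle$, by the integral representation on $\mathcal{A}_i$. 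Summing over $i$ yields $\int_\Delta\langle\Phi,s\rangle\,d\boldsymbol{\nu}(s)=\sum_{i=1}^d\langle\phi_i,\mu_i\rangle=\langle\Phi,\mu\rangle$, which is exactly the barycenter condition, and the proof is complete.

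The only delicate points are bookkeeping: one must confirm that the dual of the product topology is the (algebraic direct sum of the) marginal duals, so that testing against all continuous linear functionals reduces to testing against finite sums of coordinate functionals, and that the coordinate maps $s\mapsto\langle\phi_i,s_i\rangle$ are measurable for the product $\sigma$-algebra chosen for $\boldsymbol{\nu}$. Both are standard once the product topology and product $\sigma$-algebra are fixed. I expect the place to be most careful is precisely this identification of the dual, since the whole argument relies on a functional on $\mathcal{A}$ decoupling across coordinates; beyond that the proof is merely Fubini's theorem combined with the marginal integral representations.
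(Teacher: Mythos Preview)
Your proof is correct and uses the same construction as the paper: set $\boldsymbol{\nu}=\boldsymbol{\nu}_1\otimes\cdots\otimes\boldsymbol{\nu}_d$ and invoke Fubini. The difference lies only in which test functionals you check the barycenter identity against. You work in the Cartesian product $\prod_i\mathcal{A}_i$ with the product weak$^\ast$ topology, identify its dual as the direct sum of the marginal duals, and therefore reduce to single coordinate functionals $s\mapsto\langle\phi_i,s_i\rangle$. The paper instead identifies $\mu\in\mathcal{A}$ with the product measure $\mu_1\otimes\cdots\otimes\mu_d$ on $\mathcal{X}$ and verifies the identity for every $\phi\in C_b(\mathcal{X})$, applying Fubini twice (once over $\mathcal{X}$, once over $\Delta$). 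Your route is lighter and is exactly the barycenter condition needed for the abstract results of \Cref{sec: proofs}, where $\mathcal{A}$ is treated as a convex subset of a product TVS. The paper's route proves something formally stronger---the barycentrical formula for the \emph{multilinear} functionals $(\mu_1,\dots,\mu_d)\mapsto\int_\mathcal{X}\phi\,d(\mu_1\otimes\cdots\otimes\mu_d)$---which is precisely what is used later for measure-affine functionals such as $\mu\mapsto\mathbb{E}_\mu[q_G]$ in \Cref{subsec: example of measure affine function}. So both arguments are valid; yours is the cleaner match for the stated barycenter notion, while the paper's buys the ingredient needed downstream.
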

\begin{proof}
  let $\mu$ be in $\mathcal{A}$, so that $\mu = \mu_1\otimes \dots \otimes \mu_d$, with $\mu_i \in\mathcal{A}_i$. Because of the integral representation property of $\mathcal{A}_i$, there exists a probability measure $\boldsymbol{\nu_i}$ supported by $\Delta_i(N_i)$ such that $\mu_i$ is the barycenter of $\boldsymbol{\nu_i}$, i.e. $\mu_i=\int_{\Delta_i(N_i)} s_i\,  d\boldsymbol{\nu}(s_i)$. Therefore, for any function $\phi\in C_b(\mathcal{X})$, using Fubini's Theorem, we have
  \begin{IEEEeqnarray*}{rCl}
    \int_\mathcal{X} \phi(\mathbf{x}) \,\mu(\mathrm{d}\mathbf{x}) & = & \int_\mathcal{X} \phi(x_1,\dots,x_d)\, \mu_1(\mathrm{d}x_1)\dots \mu_d(\mathrm{d}x_d)\ , \\
    & = & \int_\mathcal{X} \phi(x_1,\dots,x_d)\, \int_{\Delta_1(N_1)} s_1(\mathrm{d}x_1)\, \mathrm{d}\boldsymbol{\nu}_1(s_1)\dots \int_{\Delta_d(N_d)} s_d(\mathrm{d}x_d)\, \mathrm{d}\boldsymbol{\nu}_d(s_d)\ , \\
    & = & \int_{\Delta_1(N_1)}\cdots\int_{\Delta_d(N_d)} \int_\mathcal{X} \phi(x_1,\dots,x_d)\,  s_1(\mathrm{d}x_1)\cdots s_d(\mathrm{d}x_d)\, \mathrm{d}\boldsymbol{\nu}_1(s_1)\dots  \mathrm{d}\boldsymbol{\nu}_d(s_d)\ , \\
    & = & \int_{\Delta}\int_\mathcal{X} \phi(\mathbf{x})\,  s(\mathrm{d}\mathbf{x})\, \mathrm{d}\boldsymbol{\nu}(s)\ ,
  \end{IEEEeqnarray*} 
  where $\boldsymbol{\nu} = \boldsymbol{\nu}_1\otimes \dots \otimes \boldsymbol{\nu}_d$ is a probability measure supported on $\Delta$. This means that $\mu$ is the barycenter of $\boldsymbol{\nu}$ in the product space $\mathcal{A}$.
\end{proof}
The proposition above guarantees the existence of the integral representation on the product space $\mathcal{A}$ whenever each marginal space $\mathcal{A}_i$ possesses itself an integral representation property. However, the subspace of $\mathcal{A}$ restricted by moment constraints on the joint distribution $\{\mu\in\mathcal{A}\ | \ \mathbb{E}_\mu[\varphi^{(j)}]\leq 0 , \, 1\leq j\leq N\}$ has a more complex structure than the marginal spaces $\mathcal{A}_i$ from \Cref{sec: Notation and preliminaries}. Indeed, its extreme points are not convex combinations of $N+1$ elements of the generator of $\mathcal{A}$: $\Delta=\bigotimes_{i=1}^d \Delta_{i}(N_i)$. As stated in reduction Theorem \ref{th: reduction theorem on product measure space}, its extreme points are elements of the $d$-fold product of finite convex combinations of extreme points of $\mathcal{A}_i$, that is $\bigotimes_{i=1}^d \Delta_{i}(N_i+N)$.

Notice also that Theorem \ref{th: reduction theorem on product measure space} extends the work of \cite{owhadi_optimal_2013}. Indeed, in this paper the authors were the first to propose the reduction Theorem on a product space. Nevertheless, the optimization considered therein is restricted only to products of moment classes and did not include unimodal moment classes. Moreover, the optimized functional in \cite{owhadi_optimal_2013} is an affine function of the measure. This is a very particular case of our framework. We emphasize that measure affine functions are useful; some of their properties are discussed in the next section.

\subsection{Relaxation of the lower semicontinuity assumption}
\subsubsection{Measure affine functions}
\label{sec: measure affine functions}
The function to be optimized is assumed both lower semicontinuous and quasi-convex. It appears that quasi-convexity covers a large class of functionals fitting most of our application cases. Nevertheless, lower semicontinuity is not always satisfied. So that, it is very interesting to relax this assumption.

In this section, we study specific class of functions that are called measure affine \cite{winkler_extreme_1988}. These functions and their optimization on product measure spaces have been already studied in \cite{owhadi_optimal_2013}. We recall that $ \mathcal{X}, \mathcal{A}$ and $\Delta$ are the product spaces constructed in \Cref{subsec: construction of the product measure spaces}.
\begin{definition}
  A function $F$ is called measure affine whenever $F$ is integrable with respect to any probability measure $\boldsymbol{\nu}$ on $\Delta$ with barycenter $\mu\in\mathcal{A}$ and $F$ fulfills the following barycentrical formula 
  \[ F(\mu) = \int_\Delta F(s)\, d\boldsymbol{\nu}(s)\ .\]
\end{definition}

Notice that any measure affine function satisfies $F(\lambda \mu + (1-\lambda)\pi) = \lambda F(\mu) + (1-\lambda) F(\pi)$, for $\mu,\pi\in\mathcal{A}$ and $\lambda\in[0,1]$. Hence, it is both quasi-convex and quasi-concave. In the following, we show that the optimum of a measure affine function can be computed only on the extreme points of the optimization set, independently of the regularity of $F$. For an extended version enforcing moment constraints on the product measure as in Theorem \ref{th: reduction theorem on product measure space}, we refer to \cite[p.71]{owhadi_optimal_2013}.
\begin{theorem}
  \label{th: reduction theorem for affine measure functional}
  Let $\mathcal{A}$ be a convex subset of a locally convex topological vector space satisfying barycentric property. For any measure affine functional $F$ we have 
  \[\sup_{\mu\in\mathcal{A}} F(\mu) = \sup_{\mu\in\Delta} F(\mu) \ , \]
  and,
  \[ \inf_{\mu\in\mathcal{A}} F(\mu) = \inf_{\mu\in\Delta} F(\mu) \ . \]
\end{theorem}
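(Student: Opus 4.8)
The plan is to use directly the defining barycentric identity of a measure affine functional, so that each of the two equalities reduces to a short two-sided estimate. Since $\Delta\subseteq\mathcal{A}$, the inequalities $\sup_{\mu\in\Delta}F(\mu)\leq\sup_{\mu\in\mathcal{A}}F(\mu)$ and $\inf_{\mu\in\Delta}F(\mu)\geq\inf_{\mu\in\mathcal{A}}F(\mu)$ are immediate; only the reverse inequalities require an argument.

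For the reverse inequality for the supremum, I would fix an arbitrary $\mu\in\mathcal{A}$ and invoke the barycentric property of $\mathcal{A}$ to get a probability measure $\boldsymbol{\nu}$ on $\Delta$ whose barycenter is $\mu$. If $\sup_{\Delta}F=+\infty$ there is nothing to prove, so assume this supremum is finite. Then $F(s)\leq\sup_{\Delta}F$ for every $s\in\Delta$, and since $F$ is $\boldsymbol{\nu}$-integrable (this is part of the definition of measure affine), the barycentrical formula gives
\[ F(\mu)=\int_\Delta F(s)\,d\boldsymbol{\nu}(s)\leq\sup_{\Delta}F\ . \]
Taking the supremum over $\mu\in\mathcal{A}$ yields $\sup_{\mathcal{A}}F\leq\sup_{\Delta}F$, hence equality.

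For the infimum, I would note that $-F$ is again measure affine (the barycentrical formula is linear in $F$ and $\boldsymbol{\nu}$-integrability is preserved under negation), so applying the supremum statement to $-F$ gives $\sup_{\mathcal{A}}(-F)=\sup_{\Delta}(-F)$, i.e. $\inf_{\mathcal{A}}F=\inf_{\Delta}F$. Equivalently, one simply repeats the sandwich argument above with the inequality reversed, using $F(s)\geq\inf_{\Delta}F$ and the degenerate case $\inf_{\Delta}F=-\infty$.

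The only genuine point of care — and what I would call the main (and rather minor) obstacle — is the handling of integrability and of infinite values: one must rely on the fact that measure affineness was defined to include $\boldsymbol{\nu}$-integrability of $F$ for \emph{every} representing measure $\boldsymbol{\nu}$, so that $\int_\Delta F\,d\boldsymbol{\nu}$ is well defined, and one must dispatch the degenerate cases $\sup_\Delta F=+\infty$ and $\inf_\Delta F=-\infty$ separately. Beyond the existence of the representing measures (the barycentric property, assumed in the statement), no topological input whatsoever is used; in particular, unlike Theorem~\ref{th: reduction theorem on product measure space}, no semicontinuity of $F$ is needed.
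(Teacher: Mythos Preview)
Your proposal is correct and matches the paper's own proof almost verbatim: fix $\mu\in\mathcal{A}$, pick a representing measure $\boldsymbol{\nu}$ on $\Delta$, use the barycentrical formula $F(\mu)=\int_\Delta F\,d\boldsymbol{\nu}\leq\sup_\Delta F$, and note the converse is trivial since $\Delta\subset\mathcal{A}$; the infimum is handled symmetrically. Your additional remarks on integrability and the degenerate cases $\sup_\Delta F=+\infty$, $\inf_\Delta F=-\infty$ are sound and slightly more careful than the paper's version, but the argument is the same.
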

\begin{proof} The proof is given for the supremum, but it is similar for the infimum. Given $\mu\in\mathcal{A}$, the integral representation property states that there exists a probability measure $\boldsymbol{\nu}$ supported on $\Delta$ such that $\mu$ is the barycenter of $\boldsymbol{\nu}$. Therefore,
  \[ F(\mu) = \int_{\Delta} F(s)d\boldsymbol{\nu}(s) \leq \sup_{s\in\Delta} F(s) \, \]
  for any $\mu\in\mathcal{A}$. Hence, $\sup_{\mu\in\mathcal{A}} F(\mu) \leq \sup_{\mu\in\Delta} F(\mu)$, the converse is obvious as $\Delta \subset \mathcal{A}$.
\end{proof}

\subsubsection{Ratio of measure affine functions}
From the previous theorem, the supremum of a measure affine functional can be searched for only on the generator of the measure space $\mathcal{A}$. We examine some transformation of measure affine functions under which the reduction Theorem \ref{th: reduction theorem on product measure space} still holds and for which the lower semicontinuity assumption remains a non-necessary condition. Ratios of measure affine functions are particularly interesting as it appears in many practical quantities of interest (see for instance \Cref{subsec: sensitivity index} and \Cref{subsec: robust bayesian framework}).

\begin{proposition}
  \label{prop: ratio of measure affine functionals}
  Let $\mathcal{A}$ be a convex set of measures with generator $\Delta$. Let $\phi$ and $\psi$ be two measure affine functionals, $\psi>0$. Then
  \[ \sup_{\mu\in\mathcal{A}} \frac{\phi}{\psi} = \sup_{\mu\in\Delta} \frac{\phi}{\psi} \ , \]
  and,
  \[ \inf_{\mu\in\mathcal{A}} \frac{\phi}{\psi} = \inf_{\mu\in\Delta} \frac{\phi}{\psi} \ .\]
\end{proposition}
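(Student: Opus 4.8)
The plan is to linearize the ratio and reduce to the already‐established reduction theorem for measure affine functionals (Theorem~\ref{th: reduction theorem for affine measure functional}). I will only spell out the supremum; the infimum is symmetric. Set $M := \sup_{\mu\in\Delta} \phi(\mu)/\psi(\mu)$. Since $\Delta\subset\mathcal{A}$ we immediately have $M \le \sup_{\mu\in\mathcal{A}} \phi(\mu)/\psi(\mu)$, so only the reverse inequality needs work. If $M=+\infty$ there is nothing to prove, hence assume $M<+\infty$.

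The key observation is that for a fixed scalar $M$ the functional $\phi - M\psi$ is again measure affine: for any probability measure $\boldsymbol{\nu}$ on $\Delta$ with barycenter $\mu\in\mathcal{A}$, both $\phi$ and $\psi$ are $\boldsymbol{\nu}$-integrable by hypothesis, hence so is $\phi - M\psi$, and the barycentric identity $F(\mu)=\int_\Delta F(s)\,d\boldsymbol{\nu}(s)$ for $F=\phi-M\psi$ follows from the identities for $\phi$ and $\psi$ by linearity of the integral. I would therefore apply Theorem~\ref{th: reduction theorem for affine measure functional} to $F=\phi-M\psi$, giving
\[
  \sup_{\mu\in\mathcal{A}} \bigl(\phi(\mu) - M\psi(\mu)\bigr) \;=\; \sup_{s\in\Delta} \bigl(\phi(s) - M\psi(s)\bigr).
\]

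Now I would use the strict positivity of $\psi$ to convert the ratio bound on $\Delta$ into a bound on $\phi - M\psi$. By definition of $M$, for every $s\in\Delta$ we have $\phi(s)/\psi(s) \le M$, and multiplying by $\psi(s)>0$ yields $\phi(s) - M\psi(s) \le 0$. Hence the right‐hand side above is $\le 0$, and therefore $\phi(\mu) - M\psi(\mu) \le 0$ for every $\mu\in\mathcal{A}$. Dividing by $\psi(\mu)>0$ gives $\phi(\mu)/\psi(\mu) \le M$ for all $\mu\in\mathcal{A}$, i.e.\ $\sup_{\mu\in\mathcal{A}} \phi(\mu)/\psi(\mu) \le M$, which is the missing inequality. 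For the infimum one sets $m := \inf_{\mu\in\Delta}\phi(\mu)/\psi(\mu)$, assumes $m>-\infty$, applies the infimum part of Theorem~\ref{th: reduction theorem for affine measure functional} to $\phi - m\psi$, and uses $\phi(s)-m\psi(s)\ge 0$ on $\Delta$.

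The only points requiring care, rather than a genuine obstacle, are the boundary cases $M=+\infty$ (resp.\ $m=-\infty$), which are disposed of by the trivial inclusion $\Delta\subset\mathcal{A}$, and the verification that $\phi - M\psi$ inherits both the integrability and the barycentric formula defining measure affineness — both of which are immediate from linearity once $\phi$ and $\psi$ individually satisfy them. The essential mechanism is simply that strict positivity of $\psi$ makes $\{\phi/\psi \le M\}$ and $\{\phi - M\psi \le 0\}$ the same set, so optimizing the ratio is equivalent to certifying sign of a measure affine functional, which is exactly what Theorem~\ref{th: reduction theorem for affine measure functional} handles on the generator $\Delta$.
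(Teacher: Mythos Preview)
Your proof is correct and is essentially the same argument as the paper's, just packaged differently: the paper works directly with the barycentric integral, writing $\phi(\mu)=\int_\Delta \frac{\phi(s)}{\psi(s)}\,\psi(s)\,d\boldsymbol{\nu}(s)\le \bigl(\sup_\Delta \phi/\psi\bigr)\int_\Delta \psi(s)\,d\boldsymbol{\nu}(s)=\bigl(\sup_\Delta \phi/\psi\bigr)\psi(\mu)$, whereas you linearize via $\phi-M\psi$ and invoke Theorem~\ref{th: reduction theorem for affine measure functional}. Since the proof of Theorem~\ref{th: reduction theorem for affine measure functional} is itself just the barycentric identity plus a sup bound, the two arguments unwind to the same computation; the paper's version is marginally cleaner in that it avoids the separate treatment of the case $M=+\infty$.
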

\begin{proof} The proof is given for the supremum, but it is similar for the infimum. Given $\mu\in\mathcal{A}$, the integral representation property states that there exists a probability measure $\boldsymbol{\nu}$ supported on $\Delta$ with barycenter $\mu$. Therefore,
  \begin{IEEEeqnarray*}{rCl}
      \phi(\mu) & = & \int_{\Delta} \phi(s) \, d\boldsymbol{\nu}(s)\ , \\
      & = & \int_{\Delta} \frac{\phi(s)}{\psi(s)}\psi(s) \, d\boldsymbol{\nu}(s)\ , \\
      & \leq & \sup_{\Delta}  \frac{\phi}{\psi} \int_{\Delta} \psi(s) \, d\boldsymbol{\nu}(s)\ , \\
      & = &  \sup_{\Delta} \frac{\phi}{\psi} \ \psi(\mu) \ .
  \end{IEEEeqnarray*}
  So that, $\phi(\mu)/ \psi(\mu) \leq \sup_{\Delta} \phi/\psi$ for all $\mu\in\mathcal{A}$, hence $\sup_{\mathcal{A}} \phi/\psi\leq \sup_{\Delta} \phi/\psi$. The other inequality is obvious as $\Delta \subset\mathcal{A}$.
  \end{proof}
Notice that the ratio of a convex function by a positive concave function is quasi-convex \cite[p.51]{Clausing_1983}. Thus, in the previous Proposition the ratio $\phi/\psi$ is quasi-convex.

\section{Applications}
\label{sec: applications}
In this section, we study some practical applications of Theorem \ref{th: reduction theorem on product measure space}, based on real life engineering problems. In the following, we consider a computer code $G$, that can be seen as a black box function. The code $G$ takes $d$ scalar input parameters, that may represent for instance physical quantities. In uncertainty quantification (UQ) methods \cite{ghanem_handbook_2017}, we aim to assess the uncertainty tainting the result of the computer simulation, whose input values are uncertain and modeled as random variables $X_i\sim \mu_i$. They are all assumed to be independent for sake of simplicity. The output of the code $Y=G(X_1,\dots,X_d)$ is therefore also a random variable. Generally, one is interested by the computation of some quantity of interest on the output of the code. However, the choice of the input distributions $(\mu_i)_{\{1\leq i\leq d\}}$ is often itself uncertain. So that, the distributions are often restricted for simplicity to some parametric family, such as Gaussian or uniform. The distribution parameters are then generally estimated with the available information coming from data and/or expert opinion. In practice, this information is often reduced to an input mean value or a variance. We aim to account for the uncertainty on the input distribution choice. So that, we wish to evaluate the maximal quantity of interest over a class of probability distributions. 

In this section, $\mathcal{X}, \mathcal{A}$ and $\Delta$ are constructed as in \Cref{subsec: construction of the product measure spaces}. Therefore, the input distribution $\mu=\mu_1\otimes \dots\otimes \mu_d$ is an element of $\mathcal{A}$, i.e. a product of $d$ independent input measures $\mu_i\in\mathcal{A}_i$. Thus, any input distribution is imprecisely defined and supposed to belong to a moment class or a unimodal moment class.

\subsection{Example of measure affine functions}
\label{subsec: example of measure affine function}
It was shown in \Cref{sec: measure affine functions}, that measure affine functions were particularly interesting because of the relaxation of the lower semicontinuity assumption. Moreover, we have also seen that an affine functional is both quasi-convex and quasi-concave. Hence, it is possible to minimize or maximize the quantity of interest (Theorem \ref{th: reduction theorem for affine measure functional}). We study here specific measure affine functions.
\begin{proposition}
  Let $\mathcal{A}$ be a convex set of probability measure with generator $\Delta$, and let $q_G$ be integrable on $\mathcal{X}$ with respect to any measure in $\Delta$. Then the functional $\mu\mapsto\mathbb{E}_\mu[q_G] = \int_\mathcal{X} q_G\,d\mu$ is measure affine.
\end{proposition}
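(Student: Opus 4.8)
The plan is to reduce the claim to the Fubini--Tonelli theorem, in exactly the spirit of the proof of the preceding Proposition on integral representations of product measures; the only genuinely new point is that $q_G$ is merely measurable and integrable rather than continuous and bounded, so the barycenter identity, which we are given for $C_b(\mathcal X)$, must first be promoted to the relevant measurable function. Fix a probability measure $\boldsymbol{\nu}$ on $\Delta$ whose barycenter is some $\mu\in\mathcal A$, and introduce the \emph{mixture measure} $\lambda$ on $\mathcal X$ defined by $\lambda(B)=\int_\Delta s(B)\,d\boldsymbol{\nu}(s)$. To see this is meaningful one checks the standard fact that $s\mapsto s(B)$ is $\boldsymbol{\nu}$-measurable for every Borel $B\subseteq\mathcal X$ (it holds for open $B$ by lower semicontinuity of $s\mapsto s(B)$ in the weak$^\ast$ topology and extends by a Dynkin/monotone-class argument), so $\lambda$ is a well-defined Borel probability measure on $\mathcal X$.

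The first real step is to show $\lambda=\mu$. For every $\phi\in C_b(\mathcal X)$, the integral representation (barycenter) property gives
\[\int_{\mathcal X}\phi\,d\mu=\int_\Delta\langle\phi,s\rangle\,d\boldsymbol{\nu}(s)=\int_\Delta\Bigl(\int_{\mathcal X}\phi\,ds\Bigr)d\boldsymbol{\nu}(s)=\int_{\mathcal X}\phi\,d\lambda,\]
the last equality being Tonelli applied to $\phi^{+}$ and $\phi^{-}$. Since $\mathcal X$ is a finite product of Suslin spaces and real intervals, hence Suslin, all finite Borel measures on $\mathcal X$ are Radon and thus regular, and such measures are uniquely determined by their integrals against $C_b(\mathcal X)$; therefore $\lambda=\mu$.

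The second step is the integrability and the barycentrical formula. Applying Tonelli to $|q_G|\ge 0$ and using $\lambda=\mu$ yields
\[\int_\Delta\mathbb{E}_s\bigl[|q_G|\bigr]\,d\boldsymbol{\nu}(s)=\int_{\mathcal X}|q_G|\,d\lambda=\mathbb{E}_\mu\bigl[|q_G|\bigr].\]
Because $q_G$ is integrable with respect to every measure in $\Delta$, the map $s\mapsto\mathbb{E}_s[q_G]$ is well defined $\boldsymbol{\nu}$-a.e., and the identity above shows that its $\boldsymbol{\nu}$-integrability is equivalent to the $\mu$-integrability of $q_G$, which is exactly what is required for the barycentrical formula to have content. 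Under that integrability, Fubini's theorem applied to $q_G=q_G^{+}-q_G^{-}$ gives
\[\mathbb{E}_\mu[q_G]=\int_{\mathcal X}q_G\,d\lambda=\int_\Delta\Bigl(\int_{\mathcal X}q_G\,ds\Bigr)d\boldsymbol{\nu}(s)=\int_\Delta\mathbb{E}_s[q_G]\,d\boldsymbol{\nu}(s),\]
which is precisely the defining barycentrical identity, so $F\colon\mu\mapsto\mathbb{E}_\mu[q_G]$ is measure affine.

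The step I expect to be the main obstacle is the identification $\lambda=\mu$, i.e.\ upgrading the barycenter identity from $C_b(\mathcal X)$ to all bounded Borel functions (equivalently, to indicators of Borel sets). This rests on two topological facts: the measurability of $s\mapsto s(B)$, and the fact that on a Suslin space a Borel probability measure is Radon and hence determined by $C_b(\mathcal X)$. Both are already implicit in the construction of $\mathcal A$ and in the Fubini argument used for the product integral representation, so no hypotheses beyond those in \Cref{subsec: construction of the product measure spaces} are needed; the remainder is routine Fubini--Tonelli bookkeeping.
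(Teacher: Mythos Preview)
Your proof is correct and follows essentially the same route as the paper's: identify $\mu$ with the mixture $\int_\Delta s\,d\boldsymbol{\nu}(s)$ and then interchange integrals via Fubini--Tonelli. The paper presents only the four-line Fubini calculation and tacitly treats the barycenter identity as an equality of measures; your version makes explicit the two points the paper glosses over (promoting the $C_b$ barycenter identity to $\lambda=\mu$ via Radon regularity on the Suslin space $\mathcal X$, and checking the integrability hypothesis needed for Fubini), so it is a more rigorous rendering of the same argument rather than a different approach.
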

\begin{proof}
  \begin{IEEEeqnarray*}{rCl}
      \mathbb{E}_\mu[q_G] & = & \int_\mathcal{X} q_G(\mathbf{x}) \,\mu(d\mathbf{x}) \ , \\
      & = & \int_\mathcal{X} q_G(\mathbf{x})\, \int_{\Delta} s(d\mathbf{x})\, d\boldsymbol{\nu}(s) \ ,\\
      & = & \int_{\Delta} \left( \int_{\mathcal{X}}  q_G(\mathbf{x})\, s(d\mathbf{x}) \right) \, d\boldsymbol{\nu}(s) \ , \\
      & = & \int_{\Delta} \mathbb{E}_s[q_G] \, d\boldsymbol{\nu}(s) \ .
  \end{IEEEeqnarray*}  
\end{proof}

The measure affine functional $\mu\mapsto\mathbb{E}_\mu[q_G]$ covers a large range of interest quantities. For instance, the choice $q_G(\mathbf{x})= G(\mathbf{x})$ leads to the expectation of the computer code $G$. Further, any moment can be studied using $q_G(\mathbf{x})=G(\mathbf{x})^n$. The choice $q_G(\mathbf{x})=\mathbbm{1}_{C_G}$, the indicator function on a set $C_G$, yields a probability. An important example would be $q_G(\mathbf{x}) = \mathbbm{1}_{\{G(\mathbf{x})\leq h\}}$, which yields the failure probability at threshold $h\in\mathbb{R}$. The choice of a loss function $q_G(\mathbf{x}) = L(G(\mathbf{x}),a)$ where $a$ is some decision, yields the expected loss associated to the decision $a$.

The interested reader will remark that the lower semicontinuity of the previous affine functional relies on that of $q_G$. More precisely, lower semicontinuity of $q_G$ (respectively upper semicontinuous) implies lower semicontinuity (respectively upper semicontinuity) of the mapping $\mu\mapsto \int q_G\, d\mu$ \cite[Theorem 15.5]{aliprantis_infinite_2006}.

\subsection{Non-Linear Quantities}
\label{subsec: non linear quantities}
We briefly extend the function presented in \Cref{subsec: example of measure affine function} to deal with more general quantities of the form proposed in \cite{berger_robust_1990}. That is
\[\mu\mapsto F(\mu) = \int q_G(\mathbf{x},\varphi(\mu))\, \mu(d\mathbf{x}) \ ,\]
where $\varphi(\mu)$ is measurable. The most common example is:
\[q_G(\mathbf{x},\varphi(\mu)) = (G(\mathbf{x})-\mathbb{E}_\mu[G(\mathbf{x})])^2\ ,  \]
that yields the variance of the distribution $\mu$. In order to compute this quantity, one needs to linearize the problem. The idea is to rewrite the optimization set $\mathcal{A}$ as
\[ \sup_{\mu\in\mathcal{A}} \int q_G(\mathbf{x},\varphi(\mu))\, \mu(d\mathbf{x}) = \sup_{\varphi_0} \sup_{\substack{\mu\in\mathcal{A}\\ \varphi(\mu)={\varphi_0}}} \int q_G(\mathbf{x},\varphi_0)\, \mu(d\mathbf{x})\ .\]
The reduction Theorem \ref{th: reduction theorem on product measure space} applies to the measure affine function $\mu\mapsto  \int q_G(\mathbf{x},\varphi_0)\,\allowbreak \mu(d\mathbf{x})$ on the set $\{ \mu\in\mathcal{A}\ | \ \varphi(\mu)={\varphi_0}\}$, which is the set $\mathcal{A}$ with additional constraints.

\subsection{Quantile Function}
\subsubsection{Lower Quantile Function}
A classical measure of risk widely used in industrial applications \cite{wallis_uncertainty_2004, stenger_optimal_2020}, is the quantile of the output. It is a critical criterion for evaluating safety margins \cite{iooss_advanced_2018}. In the following, $F_\mu$ denotes the cumulative distribution function of the output of the code, i.e. $F_\mu(\alpha)=\mathbb{P}(G(X)\leq \alpha)$.
\begin{theorem}
    We suppose that the code $G$ is continuous. Then the quantile function $\mu\mapsto Q_p^L(\mu)=\inf \{ x : F_\mu(x) \geq p \}$ is quasi-convex and lower semicontinuous on $\mathcal{A}$.
\end{theorem}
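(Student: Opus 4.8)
The plan is to read off both properties from the sublevel sets of $Q_p^L$, exploiting that for fixed $s$ the map $\mu\mapsto F_\mu(s)$ is affine in $\mu$, and that $\{G\le s\}$ is closed because $G$ is continuous. (Continuity of $G$ will be used only for lower semicontinuity; for quasi-convexity, measurability of $G$ suffices.) First I would record the elementary fact about the generalized inverse: since $F_\mu$ is non-decreasing, for every $t\in\mathbb{R}$,
\[ Q_p^L(\mu)\le t \qquad\Longleftrightarrow\qquad F_\mu(t+\varepsilon)\ge p \ \text{ for all }\ \varepsilon>0 , \]
and therefore $\{\mu\in\mathcal{A}:Q_p^L(\mu)\le t\}=\bigcap_{n\ge 1}\bigl\{\mu\in\mathcal{A}:F_\mu(t+1/n)\ge p\bigr\}=:\bigcap_{n\ge1}S_{t+1/n}$. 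As an intersection preserves both convexity and closedness, everything reduces to the structure of a single set $S_s=\{\mu\in\mathcal{A}:F_\mu(s)\ge p\}$.

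For quasi-convexity, the point is that $F_\mu(s)=\mu(\{G\le s\})=\mathbb{E}_\mu[\mathbbm{1}_{\{G\le s\}}]$ depends affinely on $\mu$; in the product setting, freezing the marginals $\mu_j$ for $j\neq i$ and using Fubini, $F_\mu(s)=\int_{X_i}g_i\,d\mu_i$, where $g_i$ is the fixed $[0,1]$-valued measurable function obtained by integrating $\mathbbm{1}_{\{G\le s\}}$ against $\bigotimes_{j\neq i}\mu_j$. Hence $\mu_i\mapsto F_\mu(s)$ is affine, so $S_s$ is an affine half-space in each marginal coordinate and is therefore convex along every marginal slice — and so is the intersection defining $\{Q_p^L\le t\}$. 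Equivalently and more directly: if $t\ge\max\{Q_p^L(\dots,\mu_i^{0},\dots),Q_p^L(\dots,\mu_i^{1},\dots)\}$, then $F(t+\varepsilon)\ge p$ at both endpoints, and affinity in the $i$-th slot propagates the inequality to every convex combination, giving $Q_p^L(\dots,\lambda\mu_i^{0}+(1-\lambda)\mu_i^{1},\dots)\le t$. This is exactly the marginal quasi-convexity required by Theorem \ref{th: reduction theorem on product measure space}; one necessarily argues coordinate by coordinate, since a coordinatewise convex combination of product measures is not itself a product measure.

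For lower semicontinuity, $\{G\le s\}=G^{-1}((-\infty,s])$ is closed, so $\mathbbm{1}_{\{G\le s\}}$ is upper semicontinuous; by the remark recalled in \Cref{subsec: example of measure affine function} (namely \cite[Theorem 15.5]{aliprantis_infinite_2006}), the functional $\nu\mapsto\int\mathbbm{1}_{\{G\le s\}}\,d\nu=\nu(\{G\le s\})$ is then upper semicontinuous on $\mathcal{P}(\mathcal{X})$ for the weak topology. Composing with the tensorization map $(\mu_1,\dots,\mu_d)\mapsto\mu_1\otimes\dots\otimes\mu_d$, which is continuous from the product of the weak topologies into the weak topology on $\mathcal{P}(\mathcal{X})$, shows that $\mu\mapsto F_\mu(s)$ is upper semicontinuous on $\mathcal{A}$. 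Consequently each $S_s$ is closed, hence so is the countable intersection $\{Q_p^L\le t\}$. Together with the previous step, every sublevel set of $Q_p^L$ is closed and convex along marginals, which is the lower-semicontinuous quasi-convexity invoked elsewhere in the paper.

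The step I expect to be most delicate is pinning down the two "open versus closed" dichotomies at the boundary. On the order side one must keep the non-strict shifted inequality $F_\mu(t+\varepsilon)\ge p$: this is the form that both represents $\{Q_p^L\le t\}$ and survives the intersection down to $t$ itself, whereas conditions such as $F_\mu(t)>p$ describe a different set. On the topological side, the open set $\{G<s\}$ would only deliver a strict inequality, while the correct closed set $\{G\le s\}$ gives upper — not lower — semicontinuity of $\mu\mapsto F_\mu(s)$; it is precisely this one-sidedness that makes $S_s$, and hence every sublevel set of $Q_p^L$, closed. The remaining ingredients — affinity of $F_\mu(s)$, Fubini, and continuity of the tensorization map — are routine.
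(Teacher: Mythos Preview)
Your proof is correct and follows the same strategy as the paper: show that each sublevel set $\{Q_p^L\le\alpha\}$ is convex and weak-closed by rewriting it via $F_\mu$ and then using that $G^{-1}((-\infty,s])$ is closed. The only notable difference is that the paper appeals directly to the Galois equivalence $Q_p^L(\mu)\le\alpha\iff F_\mu(\alpha)\ge p$ (valid because $F_\mu$ is right-continuous) and so handles a single set rather than your intersection $\bigcap_n\{F_\mu(\alpha+1/n)\ge p\}$; your explicit treatment of marginal convexity and of the continuity of the tensorization map is more careful than the paper's one-line ``obviously convex'', but the substance is identical.
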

\begin{proof}
    A function is quasi-convex if all lower level sets are convex. Further, it is lower semicontinuous if they are closed. Hence, we consider for $\alpha\in\mathbb{R}$ the lower level set for $\alpha\in\mathbb{R}$:
    \begin{IEEEeqnarray*}{rCl}
        L_\alpha & = & \left\lbrace \mu\in\mathcal{A} \ | \ Q_p^L(\mu) \leq \alpha \right\rbrace \ , \\
        & = & \left\lbrace \mu \in\mathcal{A} \ | \ F_\mu(\alpha) \geq p \right\rbrace \ .
    \end{IEEEeqnarray*}
    Indeed, the quantile is the unique function satisfying the Galois inequalities. Therefore,
    \begin{IEEEeqnarray*}{rCl}
        L_\alpha & = & \left\lbrace \mu\in\mathcal{A} \ | \ \mu\left(G^{-1}(]-\infty,\alpha])\right)\geq p \right\rbrace \ .
    \end{IEEEeqnarray*}
    $L_\alpha$ is obviously convex and applying Corollary 15.6 in \cite{aliprantis_infinite_2006}, $L_\alpha$ is also closed (for the weak topology), as $G^{-1}(]-\infty,\alpha])$ is closed .
\end{proof}
Notice that, in this work, the quantile is a function of the measure $\mu$. However, the quantile seen as a function of random variables is not quasi-convex, this subtle point is explained in \cite{drapeau_risk_2012}.

\subsubsection{Upper Quantile Function}
In this paragraph we investigate the minimal value of the quantile of the computer model $G$. In that way, we dispose of bounds around the quantile, which quantifies the range of variation of this QoI over the measure space. However, in order to minimize the quantile, both the upper semicontinuity and quasi-concavity of the optimization function are required. A modified quantity of interest called the upper quantile function is proposed hereunder  \cite{gushchin_integrated_2017}.
\begin{theorem}
    We suppose that the code $G$ is continuous. Then, the upper quantile function $\mu\mapsto Q_p^R(\mu)=\inf \{ x : F_\mu(x) > p \}$ is quasi-concave upper semicontinuous on $\mathcal{A}$.
\end{theorem}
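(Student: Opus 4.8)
The plan is to mirror the argument used for the lower quantile function, now working with \emph{upper} level sets. Recall that a function is quasi-concave precisely when all of its upper level sets $\{\mu\in\mathcal{A}:Q_p^R(\mu)\geq\alpha\}$ are convex, and upper semicontinuous precisely when all of these sets are closed. So I would fix $\alpha\in\mathbb{R}$, set $U_\alpha=\{\mu\in\mathcal{A}\,:\,Q_p^R(\mu)\geq\alpha\}$, and reduce the whole statement to showing that $U_\alpha$ is a closed convex subset of $\mathcal{A}$.

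The first and main step is to express $U_\alpha$ as a level set of an affine, lower semicontinuous functional of $\mu$. Since $Q_p^R(\mu)=\inf\{t:F_\mu(t)>p\}=\sup\{t:F_\mu(t)\leq p\}$ (the two sets being complementary, an upper ray and a lower ray meeting at $Q_p^R(\mu)$), monotonicity of $F_\mu$ gives that $Q_p^R(\mu)\geq\alpha$ holds if and only if $F_\mu(t)\leq p$ for all $t<\alpha$, equivalently $F_\mu(\alpha^-):=\lim_{t\uparrow\alpha}F_\mu(t)\leq p$. The crucial observation is that this left limit is the mass that $G(X)$ puts on an \emph{open} half-line: $F_\mu(\alpha^-)=\mathbb{P}(G(X)<\alpha)=\mu\bigl(G^{-1}(]-\infty,\alpha[)\bigr)$. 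Since $G$ is continuous, $V_\alpha:=G^{-1}(]-\infty,\alpha[)$ is open in $\mathcal{X}$, and we obtain the clean description
\[ U_\alpha=\bigl\{\,\mu\in\mathcal{A}\ :\ \mu(V_\alpha)\leq p\,\bigr\}. \]
I expect this reduction to be the main obstacle: contrary to $Q_p^L$, the infimum defining $Q_p^R$ is in general not attained, so one cannot simply evaluate $F_\mu$ at $\alpha$ — that would introduce the \emph{closed} set $G^{-1}(]-\infty,\alpha])$ and the wrong semicontinuity for $\mu\mapsto\mu(\cdot)$. Passing to the left limit, and hence to the open set $V_\alpha$, is exactly what repairs this.

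Granting this description, both conclusions are immediate. The functional $\mu\mapsto\mu(V_\alpha)=\int\mathbbm{1}_{V_\alpha}\,d\mu$ is affine in $\mu$, so $U_\alpha$, being the preimage of $]-\infty,p]$ under an affine map, is convex; this gives quasi-concavity of $Q_p^R$. Moreover $\mathbbm{1}_{V_\alpha}$ is bounded and lower semicontinuous because $V_\alpha$ is open, so by the portmanteau-type statement already invoked in the excerpt (\cite[Corollary~15.6]{aliprantis_infinite_2006}, see also \cite[Theorem 15.5]{aliprantis_infinite_2006}) the map $\mu\mapsto\mu(V_\alpha)$ is lower semicontinuous on $\mathcal{P}(\mathcal{X})$ for the weak topology, hence on $\mathcal{A}$ with the subspace topology; therefore its lower level set $U_\alpha$ is closed in $\mathcal{A}$, which yields upper semicontinuity of $Q_p^R$. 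As $\alpha$ is arbitrary, this proves the theorem. One can then apply the dual form of Theorem~\ref{th: reduction theorem on product measure space} to $-Q_p^R$ (which is marginally quasi-convex and lower semicontinuous) to deduce that $\inf_{\mathcal{A}}Q_p^R$ is attained on $\Delta$.
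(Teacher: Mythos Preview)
Your proof is correct and follows the same overall strategy as the paper: show that each upper level set $U_\alpha=\{\mu:Q_p^R(\mu)\geq\alpha\}$ is convex and closed, the closedness coming from \cite[Corollary~15.6]{aliprantis_infinite_2006} applied to an \emph{open} preimage under $G$.

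The difference is in how $U_\alpha$ is described. You go straight to the left limit $F_\mu(\alpha^-)=\mu\bigl(G^{-1}(]-\infty,\alpha[)\bigr)$ and obtain $U_\alpha=\{\mu:\mu(V_\alpha)\leq p\}$ in one step, so that $U_\alpha$ is a \emph{single} lower level set of the lsc affine map $\mu\mapsto\mu(V_\alpha)$. The paper instead writes
\[
U_\alpha=\bigcap_{\varepsilon>0}\bigl\{\mu:\mu\bigl(G^{-1}(]-\infty,\alpha-\varepsilon])\bigr)\leq p\bigr\},
\]
then spends a separate argument showing this intersection coincides with the same intersection where the half-line $]-\infty,\alpha-\varepsilon]$ is replaced by the open $]-\infty,\alpha-\varepsilon[$, and finally concludes closedness as an intersection of closed sets. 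By continuity of the measure from below, the paper's intersection is exactly your single set $\{\mu:\mu(V_\alpha)\leq p\}$, so the two arguments are equivalent; yours simply bypasses the $\varepsilon$-approximation and the $F_c/F_o$ comparison. Your route is shorter and makes the role of the open set (and hence of the continuity of $G$) more transparent.
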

\begin{proof}
    A function is quasi-concave if all upper level sets are convex. It is upper semicontinuous if any upper level set is closed. For $\alpha\in\mathbb{R}$, the upper level set is
    \begin{IEEEeqnarray*}{rCl}
        U_\alpha & = & \left\lbrace \mu\in\mathcal{A} \ | \ Q_p^R(\mu) \geq \alpha \right\rbrace \ , \\[0.3cm]
        & = & \left\lbrace \mu \in\mathcal{A} \ | \ \forall \varepsilon >0 : F_\mu(\alpha-\varepsilon) \leq p \right\rbrace \ , \\[0.3cm]
        & = & \bigcap_{\varepsilon > 0} \left\lbrace \mu \in\mathcal{A} \ | \ F_\mu(\alpha-\varepsilon) \leq p \right\rbrace \ , \\
        & = & \bigcap_{\varepsilon > 0} \left\lbrace \mu \in\mathcal{A} \ | \ \mu(G^{-1}(]-\infty, \alpha-\varepsilon])) \leq p \right\rbrace \ , \\
        & = & \bigcap_{\varepsilon > 0} \left\lbrace \mu \in\mathcal{A} \ | \ \mu(G^{-1}(]-\infty, \alpha-\varepsilon[)) \leq p \right\rbrace \ , 
    \end{IEEEeqnarray*}
    The last two equations differ from the measured interval that is succesively semi-closed then open in the last equation. We prove that this equality holds in two times. For $\varepsilon >0$, we denote 
    \[F_c(\varepsilon)= \left\lbrace \mu \in\mathcal{A} \ | \ \mu(G^{-1}(]-\infty, \alpha-\varepsilon])) \leq p \right\rbrace\ , \]
    which accounts for the semi-closed interval, and
    \[ F_o(\varepsilon)= \left\lbrace \mu \in\mathcal{A} \ | \ \mu(G^{-1}(]-\infty, \alpha-\varepsilon[)) \leq p \right\rbrace \ ,\]
    which is based on the open interval.
    Clearly, $F_c(\varepsilon) \subset F_o(\varepsilon)$ for all $\varepsilon >0$, so that \[\cap_{\varepsilon>0} F_c(\varepsilon) \subset \cap_{\varepsilon>0}  F_o(\varepsilon)\ .\]
    For the reverse inclusion, let $\mu$ be an element of $\cap_{\varepsilon>0}  F_o(\varepsilon)$. Suppose that $\mu$ is not in $\cap_{\varepsilon>0}  F_c(\varepsilon)$. Then, there exists an $\varepsilon_0 >0$ such that $\mu(G^{-1}(]-\infty, \alpha-\varepsilon_0])) > p$. But $\mu(G^{-1}(]-\infty, \alpha-\varepsilon_0])) \leq \mu(G^{-1}(]-\infty, \alpha-\frac{\varepsilon_0}{2}[)) \leq p$, because $\mu$ is in $\cap_{\varepsilon>0}  F_o(\varepsilon)$ by construction, leading to a contradiction. 
    
    To conclude, \cite[Corollary 15.6]{aliprantis_infinite_2006} proves that $F_o(\varepsilon)$ is closed because $G^{-1}(]-\infty, \alpha-\varepsilon[)$ is open as $G$ is continuous. Hence, $U_\alpha$ is closed as an intersection of closed sets. $U_\alpha$ is also obviously convex.
\end{proof}

\subsection{Sensitivity index} 
\label{subsec: sensitivity index}
Global sensitivity analysis aims at determining which uncertain parameters of a computer code mainly drive the output. In that matter, Sobol' indices are widely used as they quantify the contribution of each input to the variance of the output of the model \cite{iooss_review_2015}. However, because the probability distributions modeling the uncertain parameters are themselves uncertain, we propose to evaluate bounds on the Sobol' indices over a class of probability measures. We will focus for simplicity on the well-known first order sensitivity index:
\[S_i(\mu_i) = \frac{\var_{\mu_i}(\mathbb{E}_{\sim i}[Y | X_i])}{\var(Y)}\ , \]
where $\mathbb{E}_{\sim i}$ denotes the expectation over all but the $i$th input variable. The total-effect index $S_{Ti}$ \cite{iooss_review_2015} could be processed in the same way (see Equation \eqref{eq: total order}). 
\begin{theorem}
  Let $\mathcal{X}, \mathcal{A}$ and $\Delta$ be defined as in \Cref{subsec: construction of the product measure spaces}. Then
  \[ \sup_{\mu_i\in\mathcal{A}_i} S_i({\mu}_i) = \sup_{\mu_i\in\Delta_{i}(N_i+1)} S_i(\mu_i) \ , \] 
  \[ \inf_{\mu_i\in\mathcal{A}_i} S_i({\mu}_i) = \inf_{\mu_i\in\Delta_{i}(N_i+1)} S_i(\mu_i) \ . \]
\end{theorem}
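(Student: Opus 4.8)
The plan is to rewrite $S_i$, for fixed marginals $\mu_j$ with $j\neq i$, as a \emph{ratio of two measure affine functionals} of $\mu_i$ once a single linear functional of $\mu_i$ has been frozen, and then to invoke Proposition~\ref{prop: ratio of measure affine functionals} together with Winkler's theorem (resp.\ its unimodal analogue) to identify the relevant generator as $\Delta_i(N_i+1)$.

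First I would introduce the two fixed measurable functions of the single variable $x_i$,
\[ g_i(x_i) := \mathbb{E}_{\sim i}[Y \mid X_i = x_i], \qquad h_i(x_i) := \mathbb{E}_{\sim i}[Y^2 \mid X_i = x_i], \]
which are well defined under the standing regularity and integrability assumptions on $G$ and do not depend on $\mu_i$. Writing $m := \mathbb{E}_{\mu_i}[g_i] = \mathbb{E}_\mu[Y]$, one has $\var_{\mu_i}(\mathbb{E}_{\sim i}[Y\mid X_i]) = \mathbb{E}_{\mu_i}[g_i^2] - m^2$ and, expanding the variance and using the tower property, $\var(Y) = \mathbb{E}_{\mu_i}[h_i] - m^2$, so that
\[ S_i(\mu_i) = \frac{\mathbb{E}_{\mu_i}[g_i^2] - m^2}{\mathbb{E}_{\mu_i}[h_i] - m^2}. \]
Since $m$ still depends on $\mu_i$ this is a quadratic, not an affine, functional, so the next step is the linearization of \Cref{subsec: non linear quantities}: letting $\mathcal{A}_i^{(m)} := \{\mu_i \in \mathcal{A}_i \, : \, \mathbb{E}_{\mu_i}[g_i] = m\}$, and letting $m$ range over the set of attainable values of $\mu_i\mapsto\mathbb{E}_{\mu_i}[g_i]$, we have $\sup_{\mu_i\in\mathcal{A}_i}S_i(\mu_i) = \sup_m \sup_{\mu_i\in\mathcal{A}_i^{(m)}} S_i(\mu_i)$, and likewise with $\inf$ in place of $\sup$.

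On $\mathcal{A}_i^{(m)}$ the scalar $m$ is a constant, so $\phi(\mu_i):=\mathbb{E}_{\mu_i}[g_i^2]-m^2$ and $\psi(\mu_i):=\mathbb{E}_{\mu_i}[h_i]-m^2 = \var(Y)$ are measure affine by the Proposition of \Cref{subsec: example of measure affine function}, and $\psi>0$ provided $G$ is not $\mu$-a.s.\ constant. Moreover $\mathcal{A}_i^{(m)}$ is again a moment class (resp.\ a unimodal moment class), now with $N_i+1$ generalized moment constraints: the original $N_i$ ones plus the equality constraint $\mathbb{E}_{\mu_i}[g_i]=m$, which is admissible because Theorem~\ref{th: extreme points of moments set} (resp.\ Theorem~\ref{th: extreme points of unimodal set}) also covers equalities. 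Hence the generator of $\mathcal{A}_i^{(m)}$ consists of convex combinations of at most $N_i+2$ Dirac masses (resp.\ uniform laws) that still satisfy the $N_i$ constraints defining $\mathcal{A}_i$; in particular it is contained in $\Delta_i(N_i+1)$. Applying Proposition~\ref{prop: ratio of measure affine functionals} to $\phi/\psi$ on $\mathcal{A}_i^{(m)}$, the supremum over $\mathcal{A}_i^{(m)}$ equals the supremum over its generator, which is contained in $\Delta_i(N_i+1)$, so $\sup_{\mu_i\in\mathcal{A}_i^{(m)}}S_i(\mu_i) \leq \sup_{\mu_i\in\Delta_i(N_i+1)}S_i(\mu_i)$; taking the supremum over $m$ and noting the trivial reverse inequality (since $\Delta_i(N_i+1)\subseteq\mathcal{A}_i$) yields the first identity, and the argument for the infimum is identical. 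The total-effect index $S_{Ti}$ is handled the same way after replacing $g_i$ and $h_i$ by the appropriate conditional expectations.

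The main obstacle is the constraint bookkeeping in the last step: one must check that freezing $\mathbb{E}_{\mu_i}[g_i]$ costs precisely one generalized moment constraint — so the generator is $\Delta_i(N_i+1)$ and not $\Delta_i(N_i+2)$ — which relies on Winkler's theorem and its unimodal analogue treating equality constraints on the same footing as inequality constraints. A secondary point to pin down is the standing non-degeneracy hypothesis $\var(Y)>0$, which makes $S_i$ well defined and supplies the positivity of $\psi$ required by Proposition~\ref{prop: ratio of measure affine functionals}.
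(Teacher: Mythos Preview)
Your proposal is correct and follows essentially the same route as the paper's proof: both expand $S_i(\mu_i)$ via the tower property, freeze the scalar $\mathbb{E}[Y]$ (your $m$, the paper's $\overline{y}_0$) as an extra equality constraint to linearize the problem, recognize the resulting expression as a ratio of two measure affine functionals of $\mu_i$, and then invoke Proposition~\ref{prop: ratio of measure affine functionals} on the augmented class with $N_i+1$ constraints to land on $\Delta_i(N_i+1)$. Your write-up is in fact a bit more explicit than the paper's about why the generator of $\mathcal{A}_i^{(m)}$ sits inside $\Delta_i(N_i+1)$ and about the non-degeneracy hypothesis $\var(Y)>0$.
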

\begin{proof}
  The proof is made for the supremum but is similar for the infimum
  \begin{IEEEeqnarray*}{rCl}
    \sup_{\mu_i\in\mathcal{A}_i} S_i(\mu_i) & = & \sup_{\mu_i\in\mathcal{A}_i} \frac{\mathbb{E}_{\mu_i}\left[ \left(\mathbb{E}_{\sim i}[Y|X_i]\right)^2 \right] - \left(\mathbb{E}_{\mu_i}\left[ \mathbb{E}_{\sim i}[Y|X_i] \right] \right)^2 }{\mathbb{E}_{\mu_i}\left[ \mathbb{E}_{\sim i}[Y^2]\right] - \left(\mathbb{E}[Y] \right)^2 } \ , \\[0.4cm]
    & = & \sup_{\mu_i\in\mathcal{A}_i} \frac{\mathbb{E}_{\mu_i}\left[ \left(\mathbb{E}_{\sim i}[Y|X_i]\right)^2 \right] - \left(\mathbb{E}[Y] \right)^2 }{\mathbb{E}_{\mu_i}\left[ \mathbb{E}_{\sim i}[Y^2]\right] - \left(\mathbb{E}[Y] \right)^2 } \ , \\[0.4cm]
    & = & \sup_{\overline{y}_0} \sup_{\substack{\mu_i\in\mathcal{A}_i \\ \mathbb{E}[Y]=\overline{y}_0 }} \frac{\mathbb{E}_{\mu_i}\left[ \left(\mathbb{E}_{\sim i}[Y|X_i]\right)^2 \right] - \overline{y}_0^2 }{\mathbb{E}_{\mu_i}\left[ \mathbb{E}_{\sim i}[Y^2]\right] - \overline{y}_0^2 } \ ,
\end{IEEEeqnarray*}
where $\overline{y}_0$ is a real. Now, the function 
\[\mu_i\longmapsto \frac{\mathbb{E}_{\mu_i}\left[ \left(\mathbb{E}_{\sim i}[Y|X_i]\right)^2 \right] - \overline{y}_0^2 }{\mathbb{E}_{\mu_i}\left[ \mathbb{E}_{\sim i}[Y^2]\right] - \overline{y}_0^2 } \ , \]
is a ratio of two measure affine functionals. Proposition \ref{prop: ratio of measure affine functionals} states that the reduction Theorem \ref{th: reduction theorem on product measure space} applies so that
\[ \sup_{\substack{\mu_i\in\mathcal{A}_i \\ \mathbb{E}[Y]=\overline{y}_0 }} \frac{\mathbb{E}_{\mu_i}\left[ \left(\mathbb{E}_{\sim i}[Y|X_i]\right)^2 \right] - \overline{y}_0^2 }{\mathbb{E}_{\mu_i}\left[ \mathbb{E}_{\sim i}[Y^2]\right] - \overline{y}_0^2 }  =  \sup_{\substack{\mu_i\in\Delta_{i}(N_i+1) \\ \mathbb{E}[Y]=\overline{y}_0 }} \frac{\mathbb{E}_{\mu_i}\left[ \left(\mathbb{E}_{\sim i}[Y|X_i]\right)^2 \right] - \overline{y}_0^2 }{\mathbb{E}_{\mu_i}\left[ \mathbb{E}_{\sim i}[Y^2]\right] - \overline{y}_0^2 } \ ,  \]
and the result follows.
\end{proof}

\subsection{Robust Bayesian framework}
\label{subsec: robust bayesian framework}
Robust Bayesian analysis \cite{ruggeri_robust_2005} studies the influence of the choice of an uncertain prior distribution. The answer is robust if it does not depend significantly on the choice of the inputs prior distributions. Therefore, a Bayesian analysis is applied to all possible prior distributions in a given class of measures.

The posterior probability distribution can be calculated with Bayes' Theorem by multiplying the prior probability distribution $\pi$ by the likelihood function $\theta\mapsto l(x\,|\,\theta)$, and then dividing by the normalizing constant, as follows:
\begin{IEEEeqnarray*}{rCl}
    l(\theta\,|\, x) & = & \frac{l(x\,|\,\theta)\pi(\theta)}{\int l(x\,|\, \theta)\, \pi(d\theta)}    
\end{IEEEeqnarray*}    
Thus, it is natural to define $\Psi$ that maps the prior probability measure to the posterior probability measure. In what follows, $X$ denotes a Polish space
  \begin{IEEEeqnarray*}{rrClrCl}
      \Psi :\quad & \mathcal{P}({X}) & \longrightarrow & \mathcal{P}({X})  \\
      & \pi & \longmapsto & \Psi(\pi) :\quad & C_b({X}) & \longrightarrow & \mathbb{R} \\
      & & & & q & \longmapsto & \Psi(\pi)(q) = \frac{\int_{X} q(\theta) l(x|\theta)\pi(d\theta)}{\int_{X} l(x|\theta) \pi(d\theta)}
  \end{IEEEeqnarray*}
The functional $\Psi$ has very useful properties:
\begin{lemma}
    \label{lem: continuity of the Bayesian transportation map}
    If the likelihood function $l(x|\cdot):\theta \mapsto l(x|\theta)$ is continuous, then $\Psi$ is continuous for the weak$^*$ topology in $\mathcal{P}(\mathcal{X})$.
\end{lemma}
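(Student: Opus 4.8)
The plan is to reduce the statement to the continuity of two scalar integrals followed by a quotient. Recall that the weak$^*$ topology on the target copy of $\mathcal{P}(X)\subset C_b(X)^*$ is generated by the evaluation maps $\nu\mapsto\langle\nu,q\rangle=\int_X q\,d\nu$, $q\in C_b(X)$. Hence it is enough to fix an arbitrary $q\in C_b(X)$ and show that the composed map
\[
\pi\longmapsto \Psi(\pi)(q)=\frac{\int_X q(\theta)\,l(x|\theta)\,\pi(d\theta)}{\int_X l(x|\theta)\,\pi(d\theta)}
\]
is continuous from $\mathcal{P}(X)$, equipped with the weak$^*$ topology, to $\mathbb{R}$; once this holds for every $q$, $\Psi$ is weak$^*$ continuous by the universal property of the initial topology on the target.

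Under the standing hypotheses that make $\Psi$ well defined — namely that $\theta\mapsto l(x|\theta)$ is a bounded continuous function and that $\int_X l(x|\theta)\,\pi(d\theta)\in(0,\infty)$ for every $\pi$ — both $\theta\mapsto q(\theta)\,l(x|\theta)$ and $\theta\mapsto l(x|\theta)$ belong to $C_b(X)$. Therefore, directly from the definition of the weak$^*$ topology, the numerator map $\pi\mapsto\int_X q\,l(x|\cdot)\,d\pi$ and the denominator map $\pi\mapsto\int_X l(x|\cdot)\,d\pi$ are both weak$^*$ continuous. Since the denominator is moreover strictly positive and the division map $(a,b)\mapsto a/b$ is continuous on $\mathbb{R}\times(0,\infty)$, the quotient $\pi\mapsto\Psi(\pi)(q)$ is continuous as a composition of continuous maps. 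As $q$ was arbitrary, $\Psi$ is continuous for the weak$^*$ topology. If one prefers a sequential formulation, note that $\mathcal{P}(X)$ is metrizable for the weak$^*$ topology because $X$ is Polish, so it is equivalent to verify that $\pi_n\to\pi$ weak$^*$ implies $\Psi(\pi_n)(q)\to\Psi(\pi)(q)$, and the computation is identical.

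The delicate point is the boundedness of $\theta\mapsto l(x|\theta)$: on a non-compact Polish space a merely continuous likelihood need not be bounded, in which case $\int_X l(x|\cdot)\,d\pi$ may be infinite and, even when finite, $\pi\mapsto\int_X l(x|\cdot)\,d\pi$ is only weak$^*$ \emph{lower} semicontinuous rather than continuous, which would break the argument. In the application of interest the likelihood is bounded (e.g.\ Gaussian observation models), so this is not restrictive; if one insisted on an unbounded $l$, the natural remedy is a truncation argument — replace $l$ by $l\wedge M$, pass to the limit along the net, then let $M\to\infty$ — which additionally requires a uniform integrability (tightness) control of $\{l\}$ along the converging net that is not automatic. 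A second point to state explicitly is the strict positivity of the denominator: it is precisely what guarantees that $\Psi$ maps into $\mathcal{P}(X)$, and it is what licenses the use of continuity of division, so it should be recorded as a hypothesis alongside continuity of $l(x|\cdot)$.
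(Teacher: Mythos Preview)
Your proof is correct and follows essentially the same approach as the paper: both reduce continuity to the observation that $q\cdot l(x|\cdot)\in C_b(X)$, so the numerator and denominator integrals are weak$^*$ continuous, and then take a quotient. The only difference is that you invoke the initial-topology characterization of weak$^*$ continuity directly, whereas the paper argues via sequences and then appeals to metrizability of $\mathcal{P}(X)$ (since $X$ is Polish) to upgrade sequential continuity to continuity; your route is slightly cleaner and is also more explicit about the boundedness of $l(x|\cdot)$ and the strict positivity of the denominator, both of which the paper leaves implicit.
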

\begin{proof}
    let ($\pi_n$) be a sequence of probability measure in $\mathcal{P}(X)$ converging in weak$^\star$ topology towards some probability measure $\pi$. The convergence in weak$^\star$ topology means that for every $q\in C_b(X)$, $\langle \pi_n | q\rangle \rightarrow \langle \pi |q\rangle$.
    But because $l(x|\cdot)$ is continuous the function $q\times l(x|\cdot)$ is also an element of $C_b(X)$, therefore
    \[\int_X q(\theta)l(x|\theta)\, \pi_n(d\theta) = \langle q\times l(x|\cdot)|\pi_n\rangle \longrightarrow \langle q\times l(x|\cdot)|\pi\rangle = \int_X q(\theta)l(x|\theta)\, \pi(d\theta)\ , \]
    This exactly means that $\Psi(\pi_n)$ converges to $\Psi(\pi)$ in the weak$^*$ topology. This implies the sequential continuity of $\Psi$, thus its continuity. Indeed, as $X$ is polish it is separable and metrizable. So that, $\mathcal{P}(X)$ is also metrizable \cite[Theorem 15.12]{aliprantis_infinite_2006}. Hence, it is first-countable \cite[Theorem 4.7]{croom_principles_2016} which implies it is also sequential. This means that sequential continuity is equivalent to continuity. 
\end{proof}
The function $\Psi$ can be decomposed as a ratio $\Psi=\Psi_1 / \Psi_2$, with
\begin{IEEEeqnarray*}{rrClrCl}
  \Psi_1 :\quad & \mathcal{P}({X}) & \longrightarrow & \mathcal{M}({X})  \\
  & \pi & \longmapsto & \Psi_1(\pi) :\quad & C_b({X}) & \longrightarrow & \mathbb{R} \\
  & & & & q & \longmapsto & \Psi_1(\pi)(q) = \int_{X} q(\theta) l(x|\theta)\pi(d\theta) \ .
\end{IEEEeqnarray*}
where $\mathcal{M}(X)$ denotes the set of all Borel measures on $X$, and
\begin{IEEEeqnarray*}{rrClrCl}
  \Psi_2 :\quad & \mathcal{P}({X}) & \longrightarrow & \mathbb{R}_{^+}^*  \\
  & \pi & \longmapsto & \int_{X} l(x|\theta) \pi(d\theta) \\[-0.9cm] 
  & & & & \color{white} q & \color{white} \longmapsto & \color{white}\Psi_1(\pi)(q) =\color{white} \int_{X} q(\theta) l(x|\theta)\pi(d\theta) \ .
\end{IEEEeqnarray*}
The main property of $\Psi_1$ and $\Psi_2$ is that they are linear maps. The posterior distribution is therefore the ratio of two linear functions of the prior density. This is particularly interesting due to the following Proposition, which states that the composition of a quasi-convex function with the ratio of two linear mapping is also quasi-convex.
\begin{proposition}
    \label{prop: Bayesian quasi-convex composition}
    Let $\mathcal{A}$ be a convex subsets of a topological vector space, and $f$ be a quasi-convex lower semicontinuous functional on $\mathcal{A}$. If $\Psi_1:\mathcal{A} \mapsto \mathcal{A} $ is a linear mapping and $\Psi_2: \mathcal{A} \mapsto \mathbb{R}_{^+}^*$ is a linear functional. Then, $f\circ(\Psi_1/\Psi_2) : \mathcal{A}\mapsto\mathbb{R}$ is also a quasi-convex lower semicontinuous functional.
    \label{lem: composition of ratio of linear functional and quasiconvexity}
\end{proposition}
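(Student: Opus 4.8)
The plan is to regard $T:=\Psi_1/\Psi_2$ as a projective transformation of $\mathcal{A}$ and to verify separately that it preserves quasi-convexity (an affine–geometric fact) and lower semicontinuity (a purely topological one); the two parts do not interact. The single computational step is the identity: fixing $\mu,\pi\in\mathcal{A}$ and $\lambda\in[0,1]$ and putting $a:=\Psi_2(\mu)>0$, $b:=\Psi_2(\pi)>0$, linearity of $\Psi_1$ and $\Psi_2$ gives
\[
T\bigl(\lambda\mu+(1-\lambda)\pi\bigr)
=\frac{\lambda\,\Psi_1(\mu)+(1-\lambda)\,\Psi_1(\pi)}{\lambda a+(1-\lambda)b}
= t\,T(\mu)+(1-t)\,T(\pi),\qquad t:=\frac{\lambda a}{\lambda a+(1-\lambda)b}\in[0,1].
\]
Hence $T$ carries the segment $[\mu,\pi]$ into the segment $[T(\mu),T(\pi)]$, only monotonically reparametrising the interpolation coefficient; this is the one place where positivity of $\Psi_2$ is used.

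Given the identity, quasi-convexity is immediate. For $\mu,\pi\in\mathcal{A}$ and $\lambda\in[0,1]$, convexity of $\mathcal{A}$ keeps $t\,T(\mu)+(1-t)\,T(\pi)$ in the domain of $f$, so quasi-convexity of $f$ yields
\[
(f\circ T)\bigl(\lambda\mu+(1-\lambda)\pi\bigr)
= f\bigl(t\,T(\mu)+(1-t)\,T(\pi)\bigr)
\le \max\{f(T(\mu)),f(T(\pi))\}
= \max\{(f\circ T)(\mu),(f\circ T)(\pi)\}.
\]

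For lower semicontinuity I would pass to sublevel sets: for each $\alpha\in\mathbb{R}$ one has $\{\mu\in\mathcal{A}:(f\circ T)(\mu)\le\alpha\}=T^{-1}\bigl(\{x\in\mathcal{A}:f(x)\le\alpha\}\bigr)$, which is closed because $\{f\le\alpha\}$ is closed ($f$ being lsc) and $T$ is continuous. Continuity of $T$ in turn follows from continuity of the linear maps $\Psi_1,\Psi_2$, the non-vanishing of $\Psi_2$ on $\mathcal{A}$, and continuity of $(y,s)\mapsto y/s$ for $s\neq 0$; in the robust Bayesian application this is precisely Lemma~\ref{lem: continuity of the Bayesian transportation map}. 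Combining the two parts, every sublevel set of $f\circ T$ is closed and convex, so $f\circ T$ is quasi-convex and lower semicontinuous.

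I expect the main obstacle to be bookkeeping rather than depth: one must notice that the lower semicontinuity conclusion genuinely needs $\Psi_1,\Psi_2$ to be \emph{continuous}, not merely linear, and check that the reparametrised coefficient $t$ really lies in $[0,1]$ so that the quasi-convexity inequality for $f$ transfers without modification. Beyond that there is nothing delicate, since a projective map sends line segments to line segments.
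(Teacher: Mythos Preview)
Your proof is correct and essentially identical to the paper's: the same reparametrisation $t=\lambda\Psi_2(\mu)/(\lambda\Psi_2(\mu)+(1-\lambda)\Psi_2(\pi))$ is used to reduce to quasi-convexity of $f$, and the same sublevel-set argument via continuity of $\Psi_1/\Psi_2$ (citing Lemma~\ref{lem: continuity of the Bayesian transportation map}) handles lower semicontinuity. Your closing remark that the lsc part genuinely requires continuity of $\Psi_1,\Psi_2$ rather than mere linearity is a valid caveat that the paper's statement glosses over.
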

\begin{proof}
    Let $\pi, \mu$ be in $\mathcal{A}$. Given $\lambda \in[0,1]$, notice that
    \begin{IEEEeqnarray*}{rCl}
        f\left( \frac{\Psi_1(\lambda \pi + (1- \lambda) \mu ) }{\Psi_2(\lambda \pi + (1- \lambda) \mu)}\right) & = & f\left( \frac{\lambda \Psi_1(\pi) + (1- \lambda) \Psi_1(\mu) ) }{\lambda \Psi_2(\pi) + (1- \lambda) \Psi_2(\mu) }\right)\ , \\ 
        \\
        & = & f\left(\beta  \frac{ \Psi_1(\pi) }{\Psi_2(\pi)} + (1-\beta)  \frac{ \Psi_1(\mu) }{\Psi_2(\mu)}\right) \ ,
    \end{IEEEeqnarray*}
    with $\displaystyle \beta = \frac{\lambda \Psi_2(\pi)}{\lambda \Psi_2(\pi) + (1-\lambda) \Psi_2(\mu)}$ in $[0,1]$. Hence,
    \begin{IEEEeqnarray*}{rCl}
        f\left( \frac{\Psi_1(\lambda \pi + (1- \lambda) \mu ) }{\Psi_2(\lambda \pi + (1- \lambda) \mu)}\right)  & \leq & \max\left\lbrace f\left(\frac{\Psi_1(\pi) }{\Psi_2(\pi)}\right)\ ;  \ f\left(\frac{\Psi_1(\mu) }{\Psi_2(\mu)}\right)\right\rbrace \ . \\    
    \end{IEEEeqnarray*}
    This proves the quasi-convexity of $f\circ(\Psi_1/\Psi_2)$. The lower semicontinuity stands because for $\alpha\in\mathbb{R}$, the lower level set 
    \[ \Gamma_\alpha = \left\lbrace \mu\in\mathcal{A} \ | \ f\left(\frac{\Psi_1(\mu)}{\Psi_2(\mu)}\right) \leq \alpha \right\rbrace = \left\lbrace \mu \ | \ \frac{\Psi_1(\mu)}{\Psi_2(\mu)} \in f^{-1}\left(]-\infty,\alpha]\right) \right\rbrace \ ,\] is the inverse image of the lower level set $\alpha$ under the continuous map $\mu\mapsto \Psi_1(\mu)/\Psi_2(\mu)$ according to Lemma \ref{lem: continuity of the Bayesian transportation map}. Therefore, $\Gamma_\alpha$ is closed.
\end{proof}
Proposition \ref{lem: composition of ratio of linear functional and quasiconvexity} proves that any lower semicontinuous quasi-convex function presented above is well suited for robust Bayesian analysis. Therefore, the input distribution of a computer model $\mu$ can derive from a Bayesian inference such that $\mu = \Psi_1(\pi)/ \Psi_2(\pi)$, where $\pi$ is an imprecise distribution modeled in a reasonable class of prior. 
For instance, the optimization of the quantile of posterior distributions inferred from priors in a moment class can be reduced to the extreme points of this class.

Moreover, one can easily see that if the functional $f$ is measure affine then $f\circ(\Psi_1/\Psi_2)$ is also the ratio of two measure affine functionals. From Proposition \ref{prop: ratio of measure affine functionals}, it then holds that lower semicontinuity is not necessary to apply the reduction theorem. This means that we can optimize moments or probabilities of the posterior distribution over a class of prior distributions \cite{sivaganesan_ranges_1989}.

\section{Application to a use case}
\label{sec: case study}

To illustrate our theoretical optimization results, we address a simplified hydraulic model \cite{pasanisi_estimation_2012}. This code calculates the water height $H$ of a river subject to a flood event. The height of the river $H$ is calculated through the analytical model given in Equation \eqref{eq:Hydraulic Model}.

\begin{minipage}[c]{0.45\linewidth}
    \includegraphics[scale=0.6]{./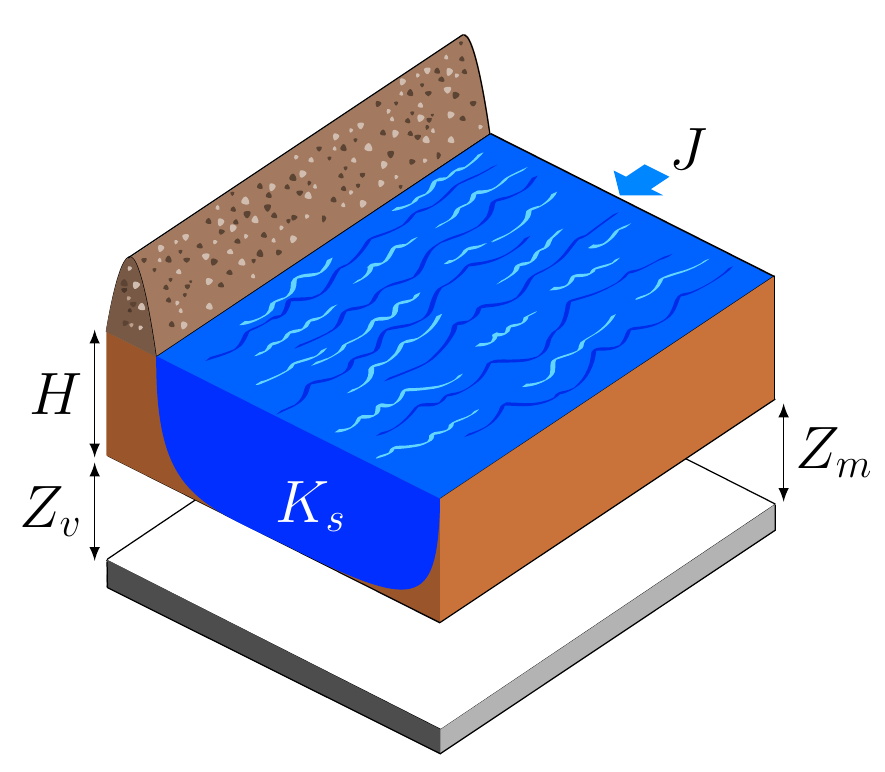}
    \captionof{figure}{Simplified river cross section.}
    \label{fig: River cross section}
\end{minipage}\hfill
\begin{minipage}[c]{0.48\linewidth}
  \begin{equation}
	H = \left(\frac{J}{300K_s \sqrt{\frac{Z_m - Z_v}{5000}}} \right)^{3/5} \ .
	\label{eq:Hydraulic Model}
\end{equation}
\vspace{0.5cm}
\end{minipage}
The code takes four inputs whose initial joint distributions are detailed in Table \ref{tab: Initial distribution hydraulic model}. The choice of uniform distributions for $Z_v$ and $Z_m$ comes from expert opinions. The normal distribution for $K_s$ models the uncertainty of the value of the empirical Manning-Strickler coefficient. At last, the choice of a Gumbel distribution is due to the extreme nature of the flooding event. We compute the maximum likelihood parameters of the Gumbel distribution based on a sample of 47 annual maximal flow rates.
\begin{table}[htb]
	\centering	\caption{Initial distribution of the 4 inputs of the hydraulic model.}
	\label{tab: Initial distribution hydraulic model}
	\begin{tabular}{lrr}
		\toprule
		Variable & Description & Distribution \\ 
		\cmidrule(lr){3-3}\cmidrule(lr){2-2}
		$J$ & annual maximum flow rate & $Gumbel(\rho = 626, \beta =190)$ \\
		$K_s$ & Manning-Strickler coefficient & $\mathcal{N}(\overline{x}=30,\sigma=7.5)$ \\
		$Z_v$ & Depth measure of the river downstream & $\mathcal{U}(49,51)$ \\
		$Z_m$ & Depth measure of the river upstream & $\mathcal{U}(54,55)$  \\
		\bottomrule
	\end{tabular}
\end{table}

Notice that the modelization of the parameters through the distribution given in Table \ref{tab: Initial distribution hydraulic model} is questionable. Therefore, as we desire to relax the choice of a specific distribution, we evaluate robust bounds over a measure space taken as a moment class. We display in Table \ref{tab : Constraints for hydraulic model} the corresponding moment constraints that the variables must satisfy. These constraints are calculated based on a sample of 47 annual flow rates and expert opinions. The bounds are taken in order to match the most acceptable values of the parameters. Notice that the distribution of $K_s$ belongs to a unimodal moment class because we consider that there is a most significant value for this empirical constant. Using the previous notations, the input distribution $\mu \sim (J,K_s,Z_v,Z_m)$ belongs to $\mathcal{A} = \mathcal{A}_1^\ast \otimes \mathcal{A}_2^\dagger \otimes \mathcal{A}_3^\ast \otimes \mathcal{A}_3^\ast $ with 
\begin{equation}
  \label{eq: use case measure space definition}
  \begin{IEEEeqnarraybox}[][c]{rCl}
    \mathcal{A}_1^\ast & = & \left\lbrace \mu_1\in\mathcal{P}([160, 3580]) \ | \ \mathbb{E}_{\mu_1}[X] = 736, \ \mathbb{E}_{\mu_1}[X^2]= 602043  \right\rbrace \ , \\
    \mathcal{A}_2^\dagger & = & \left\lbrace \mu_2\in\mathcal{H}_{30}([12.55, 47.45]) \ | \ \mathbb{E}_{\mu_2}[X] = 30,\ \mathbb{E}_{\mu_2}[X^2] = 949 \right\rbrace \ , \\
    \mathcal{A}_3^\ast & = & \left\lbrace \mu_3\in\mathcal{P}([49, 51]) \ | \ \mathbb{E}_{\mu_3}[X] = 50 \right\rbrace \ , \\
    \mathcal{A}_4^\ast & = & \left\lbrace \mu_4\in\mathcal{P}([54, 55]) \ | \ \mathbb{E}_{\mu_4}[X] = 54.5 \right\rbrace \ .
  \end{IEEEeqnarraybox}
\end{equation}
and respective extreme point sets $\Delta_1^\ast(2), \Delta_2^\dagger(2), \Delta_3^\ast(1),$ and $\Delta_4^\ast(1)$.
\begin{table}[ht]
	\centering\caption{Corresponding moment constraints of the 4 inputs of the hydraulic model.}
	\label{tab : Constraints for hydraulic model}
	\begin{tabular}{lrrrr}
		\toprule
		Variable & Bounds & Mean & \specialcell{Second order \\ moment} & Mode \\
		\cmidrule(lr){2-2}\cmidrule(lr){3-3}\cmidrule(lr){4-4}\cmidrule(lr){5-5}
    n$^\circ 1: J$ & $[160, 3580]$ & $736$ & $602043$ & $-$ \\
		n$^\circ 2: K_s$ & $[12.55, 47.45]$ & $30$ & $949$ & $a=30$\\
		n$^\circ 3: Z_v$ & $[49,51]$ & $50$ & $-$ & $-$\\
		n$^\circ 4: Z_m$ & $[54,55]$ & $54.5$ & $-$ & $-$\\
		\bottomrule
	\end{tabular}
\end{table}

The following results have been computed in Python. The quantity of interest is optimized globally using a differential evolutionary algorithm \cite{price_differential_2005} over the set of extreme points $\Delta=\Delta_1^\ast(2)\otimes\Delta_2^\dagger(2)\otimes\Delta_3^\ast(1)\otimes \Delta_4^\ast(1)$. This set is parametric as each element is a probability measure written as a product of mixture of Dirac masses and mixture of uniform distributions. In order to explore this space, we rely on a well suited parameterization based on canonical moments \cite{dette_theory_1997}. Indeed, it has been proven in $\cite{stenger_optimal_2020}$ that there exists a bijection between the set of discrete probability measures supported on at most $n+1$ points satisfying $n$ moment constraints and a pavement of the form $[0,1]^{n+1}$. This improves the exploration efficiency of the global optimizer and the overall performance of the optimization.

\subsection{Computation of failure probabilities and quantiles}

The study addresses the height choice of the protection dike in terms of cost and security.
In order to provide safety margins that are optimal with respect to the uncertainty tainting the inputs distributions, we compute the maximal $p$-quantile over the moment class defined through the constraints in Table \ref{tab : Constraints for hydraulic model}.
The computation of the maximum quantile is equivalent to the computation of the lowest failure probability $\inf_{\mu\in\mathcal{A}} F_\mu$ over the same measure space. Indeed, we have the following duality transformation \cite{stenger_optimal_2020}:
\[ \sup_{\mu \in \mathcal{A}} Q_p^L(\mu) = \inf \left\lbrace h  \in \mathbb{R}\; | \; \inf_{\mu\in\mathcal{A}} F_{\mu}(h) \geq p \right\rbrace\ . \]

The results are depicted in Figure \ref{fig: robust optimization over measure space}. The quantile of order $0.95$ is equal to $2.75m$ for the initial distribution, which gives the appropriate safety margins needed to build a protection dike. However, by considering the uncertainty tainting the input distribution contained in the class $\mathcal{A}$ (the dashed line), the maximum $0.95$-quantile over this class is equal to $3.05m$.

\subsection{Computation of a Bayesian quantity of interest}
We now consider that $J$ is modeled as initially with a Gumbel distribution (see Table \ref{tab: Initial distribution hydraulic model}). Indeed, extreme value theory \cite{coles_introduction_2001} justifies the choice of a Gumbel distribution for the maximal annual flow rate. However, in a Bayesian setting, the location parameter $\rho$, and the scale parameter $\beta$ of the Gumbel distribution are associated with a prior distribution $\pi(\rho, \beta)$. In \cite{pasanisi_estimation_2012}, the prior distribution was taken to be low informative using $\rho\sim \mathcal{G}(1, 500)$ and $1/\beta \sim \mathcal{G}(1,200)$, where $\mathcal{G}(\alpha,\tau)$ is the Gamma distribution with shape parameter $\alpha$ and scale parameter $\tau$.

This choice of prior is questionable. Instead, we used the previously computed maximum likelihood estimation as a mean constraint. The bounds are chosen as reasonable values.
\begin{table}[ht]
	\centering\caption{Corresponding moment constraints of the parameters $\rho, \beta$ of the Gumbel distribution of $J$.}
	\label{tab : Constraints for Bayesian parameters}
	\begin{tabular}{lrr}
		\toprule
		Variable & Bounds & Mean \\
		\cmidrule(lr){2-2}\cmidrule(lr){3-3}
		$\rho$ & $[550, 700]$ & $626.14$  \\
		$\beta$ & $[150, 250]$ & $190$  \\
		\bottomrule
	\end{tabular}
\end{table}

\noindent
This corresponds to two moment classes, $\rho$ belongs to $\widetilde{\mathcal{A}}_1^{\ast} = \{ \mu\in\mathcal{P}([550,700]) \ | \ \mathbb{E}_{\mu}[X]\allowbreak = 626.14 \}$ and $\beta$ to $\widehat{\mathcal{A}}_1^{\ast} = \{ \mu\in\mathcal{P}([150,250]) \ | \ \mathbb{E}_{\mu}[X] = 190 \}$. Distributions of the other parameters $K_s,  Z_v, Z_m$ are constrained to their previous classes in Equation \eqref{eq: use case measure space definition}, that is respectively $\mathcal{A}_2^\dagger, \mathcal{A}_3^\ast$ and $\mathcal{A}_4^\ast$.
Finally, the distribution  $\Theta\sim (\rho, \beta, K_s, Z_v, Z_m)$ belongs to the product space $\mathcal{A}' = \widetilde{\mathcal{A}}_1^{\ast}\otimes \widehat{\mathcal{A}}_1^{\ast} \otimes \mathcal{A}_2^\dagger \otimes \mathcal{A}_3^\ast \otimes \mathcal{A}_4^\ast $.

The Gumbel model and the analytic formulation of the code in Equation \ref{eq:Hydraulic Model} yields the exact calculation of the probability of failure conditional on $ (\rho, \beta, K_s, Z_v, Z_m)$:
\[\mathbb{P}(H \leq h \ |\ \Theta) = \exp \left(- \exp \left\lbrace \beta \left( \rho - 300K_s \sqrt{\frac{Z_m-Z_v}{5000}} (h - Z_v)^{5/3}\right) \right\rbrace \right) \ .  \]
Therefore, the Bayesian probability of failure corresponds to the integrated cost
\begin{equation}
  F_\Theta(h) = \mathbb{P}(H\leq h) = \int \mathbb{P}(H \leq h \ |\ \Theta)\, \pi(\Theta | D)\, d\Theta\ ,
  \label{eq: bayesian p.o.f}
\end{equation} 
where $\pi(\Theta |D) \propto l(D|\Theta) \pi(\Theta)$ is the posterior distribution of $\Theta$. The quantity in Equation \eqref{eq: bayesian p.o.f} is minimized over the product space $\mathcal{A}'$, the results are depicted in Figure \ref{fig: robust optimization over measure space}. The quantile of order $0.95$ is equal to $3.19 m$ which is slightly higher than for the maximal quantile over the moment class $\mathcal{A}$.

\begin{figure}[htb]
  \centering
  \includegraphics[scale=0.39]{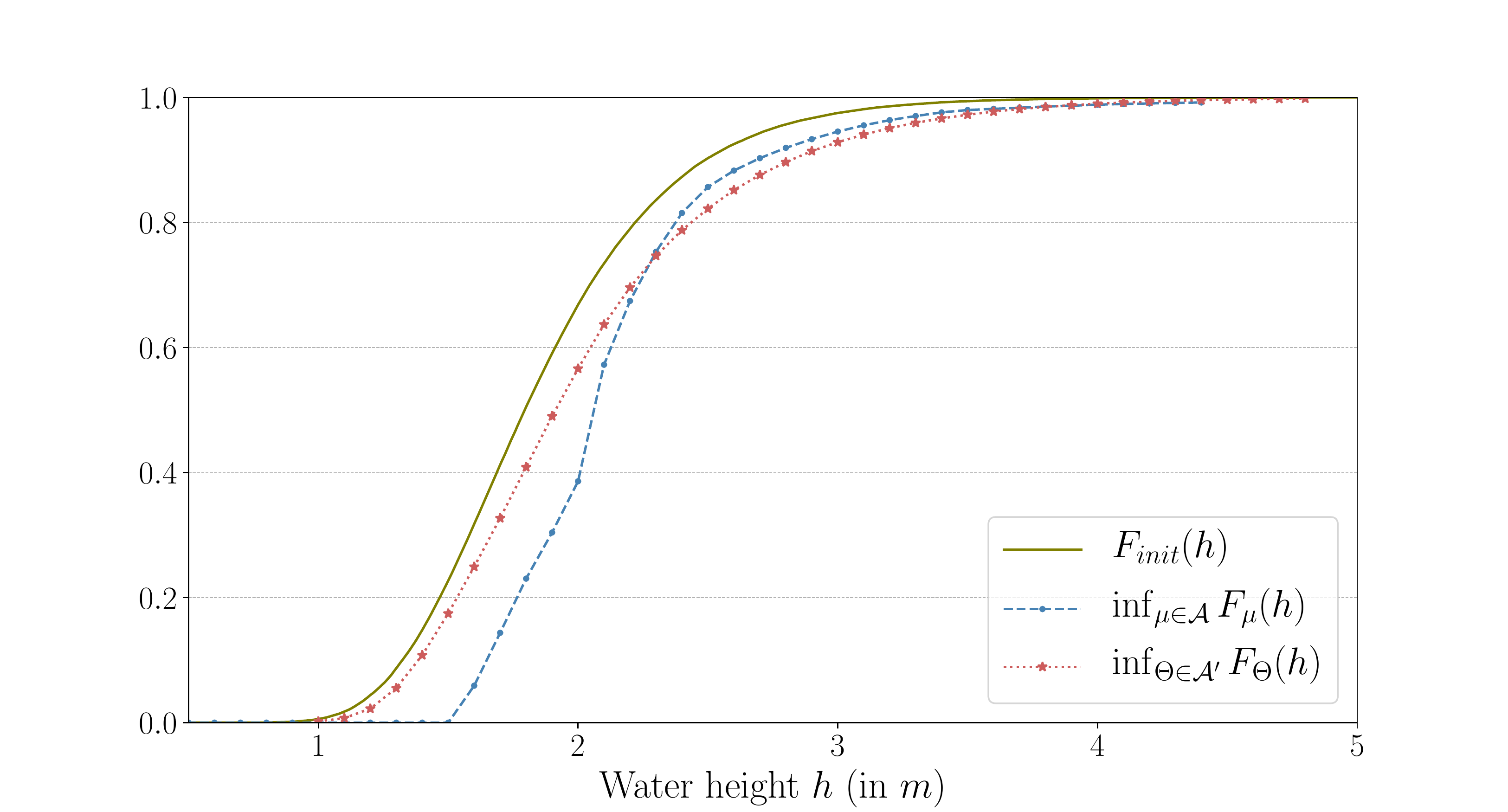}
  \caption{The solid line represents the CDF of the computer code $h\mapsto\mathbb{P}(H(J,K_s,Z_v,Z_m)\leq h)$ with the initial input distribution depicted in Table \ref{tab: Initial distribution hydraulic model}. The dashed line represents the CDF lowest envelop over the measure set $\mathcal{A}$ from Equation \eqref{eq: use case measure space definition}. The dotted line represents the optimization of the same quantity in a Bayesian framework when $J$ is a Gumbel distribution with prior density on its parameters.}
  \label{fig: robust optimization over measure space}
\end{figure}

\subsection{Computation of Sobol index}
In this section, we illustrate the impact of the uncertainty tainting the input distribution on the Sobol indices. We propose different robust computations of the Sobol' indices which lead to different interpretations. Each parameter $\mu_i$ belongs to a measure class $\mathcal{A}_i$ presented in Table \ref{tab : Constraints for hydraulic model}.

The first order indices $(S_i^0)_{1\leq i\leq 4}$ are classically computed with the nominal input distributions in Table \ref{tab: Initial distribution hydraulic model}.
We compute their robust version, corresponding to the bounds of $S_i$ when $\mu_i$ belongs to $\mathcal{A}_i$, that is 
\[S_i^{+} = \sup_{\mu_i\in\mathcal{A}_i} S_i \quad \text{ and }\quad  S_i^{-} = \inf_{\mu_i\in\mathcal{A}_i} S_i\ .\]
Notice that only the $i$th distribution varies in its moment class when computing bounds on $S_i$, the other distributions are set to their nominal choice in Table \ref{tab: Initial distribution hydraulic model}. Thus, $S_i^+$ represents the maximal contribution of the $i$th input alone onto the output variance, considering the uncertainty of the $i$th parameter distribution only. Note that $\sum_i S_i^+$ is not necessarily equals to $1$. The same interpretation holds for $S_i^{-}$. We also define the total indices \cite{iooss_review_2015} 
\begin{equation}
  \label{eq: total order}
  S_{Ti} =  \frac{\mathbb{E}_{\sim i}[\var_{\mu_i}(Y \ | \ X_{\sim i})]}{\var(Y)}    \ .
\end{equation}
Bounds on the $i$th total order index $S_{Ti}$ are computed with respect to the uncertainty of all the input distributions except the $i$th as follows:
\[S_{Ti}^{+} = \sup_{\substack{\mu_j\in\mathcal{A}_j \\ j\neq i}} S_{Ti}\quad\text{ and }\quad S_{Ti}^{-} = \inf_{\substack{\mu_j\in\mathcal{A}_j \\ j\neq i}} S_{Ti} \ .\] 
In other words, the bounds obtained are interpreted as the minimal and maximal total order index that the $i$th~input can have, considering the lack of knowledge in all but the $i$th input distribution. Hence, the lower bound on the $i$th total effect index corresponds to the minimal variance caused by its interaction between the $i$th input and the remaining parameters, and the upper bound on $S_{Ti}$ gives the maximal variance caused by its interaction between the $i$th input and the other parameters. 
\begin{figure}
  \centering
  \includegraphics[scale=0.8]{./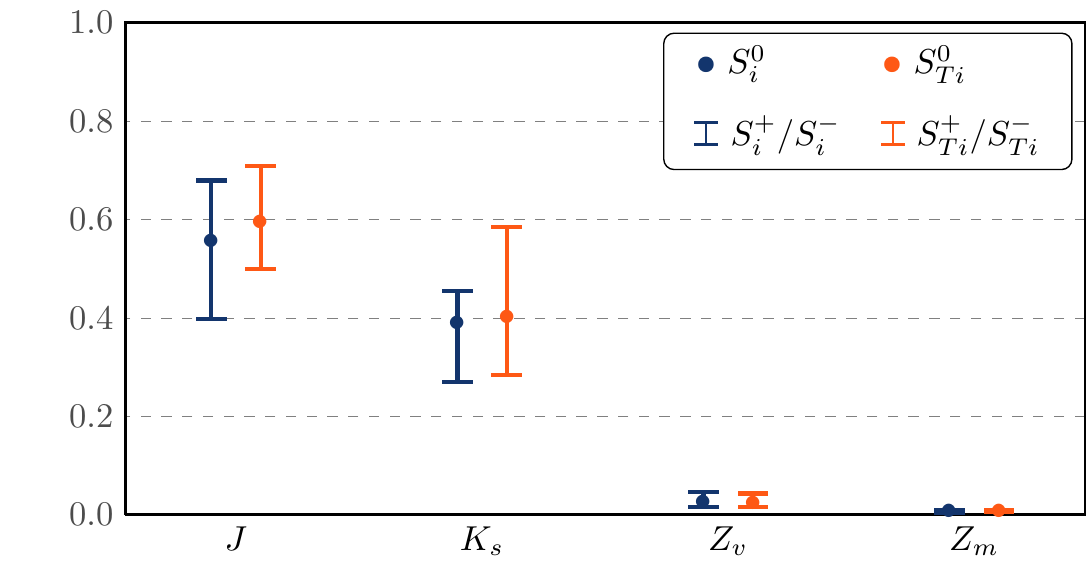}
  \caption{Different definitions of robustness for the Sobol indices yield different bounds.}
  \label{fig: Sobol robust}
\end{figure}

The results are displayed in Figure \ref{fig: Sobol robust}. We can see that whatever the input distribution is, the parameters $Z_v$ and $Z_m$ can be considered as weakly influent. The reference Sobol indices $S_i^0$ and $S_{Ti}^0$ indicate that $J$ is more influent than $K_s$. However, the optimization results show that their contribution to the output variance vary when accounting for their distribution uncertainty. The lower bound $S_1^{-}$ shows that for an imprecise measure $\mu_1$ varying in the moment class $\mathcal{A}_1^\ast$, the most penalizing distribution corresponds to a first order Sobol' index equals to $S_J^{-} = 0.39$. On the other hand, the upper bounds $S_{2}^{+}$ shows that there is a distribution in $\mu_{2}$ in $\mathcal{A}_2^\dagger$ corresponding to a first order Sobol' index $S_{2}^{+} = 0.55$. Therefore, considering the uncertainty affecting the distributions $\mu_{2}$ and $\mu_{1}$, the parameter $K_s$ can be in some cases more influent than $J$. It could be valuable in this situation to refine the information on these parameters in order to reduce the uncertainty tainting their distributions.

\section{Reduction theorem proof}
\label{sec: proofs}
In this section, we develop the proof of Theorem \ref{th: reduction theorem on product measure space}. We make few assumptions on the nature of the optimization space. In this way, the framework stays very general and can be extended to many different spaces, even though measure spaces constitute the main application of this paper. We first develop the reduction theorem for simple topological spaces before extending the result to product spaces. The proofs are quite short and rely only on simple topological arguments. We enlighten the assumption made in this work, in particular we compare our lower semicontinuity assumption with the Bauer maximum principle's upper semicontinuity one.
\subsection{Preliminary results}
The following two Lemmas are of great importance and gather the main arguments of our demonstration.
\begin{lemma}
    Let $\mathcal{A}$ be a convex subset of a locally convex topological vector space $\Omega$. If any point $x\in\mathcal{A}$ is the barycenter of some probability measure $\nu$ supported on $\Delta\subset\mathcal{A}$, then $\mathcal{A}\subset \overline{\textrm{co}}(\Delta)$.
    \label{lem: krein milman barycentric representation}
\end{lemma}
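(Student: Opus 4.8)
The plan is to argue by contradiction using the Hahn--Banach separation theorem. Fix an arbitrary $x\in\mathcal{A}$ and suppose, for contradiction, that $x\notin\overline{\textrm{co}}(\Delta)$. Since $\overline{\textrm{co}}(\Delta)$ is a closed convex subset of the locally convex topological vector space $\Omega$, and the singleton $\{x\}$ is compact, convex and disjoint from it, the strict separation theorem provides a continuous linear functional $\phi\in\Omega^\ast$ and a scalar $\alpha\in\mathbb{R}$ such that $\phi(x)>\alpha\geq\phi(y)$ for every $y\in\overline{\textrm{co}}(\Delta)$. In particular $\phi(s)\leq\alpha$ for all $s\in\Delta$.

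Next I would invoke the barycentric hypothesis: $x$ is the barycenter of a probability measure $\nu$ carried by $\Delta$, which by the definition recalled in the introduction means that $\langle\phi,x\rangle=\int_{\Delta}\langle\phi,s\rangle\,d\nu(s)$ for every element $\phi$ of the topological dual of $\Omega$. Applying this identity to the separating functional, and using that $\nu$ is a probability measure together with the pointwise bound $\langle\phi,s\rangle\leq\alpha$ on $\Delta$, one gets $\phi(x)=\int_{\Delta}\langle\phi,s\rangle\,d\nu(s)\leq\alpha\,\nu(\Delta)=\alpha$, which contradicts $\phi(x)>\alpha$. Therefore $x\in\overline{\textrm{co}}(\Delta)$, and since $x\in\mathcal{A}$ was arbitrary, $\mathcal{A}\subset\overline{\textrm{co}}(\Delta)$.

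The only points that require a little care are: (i) that the notion of barycenter used here is the scalar (weak) one, so that the separating functional may legitimately be integrated against $\nu$ and the barycentric identity applies to it --- this is exactly how the integral representation property is stated in \Cref{sec: Notation and preliminaries}, namely for every $\phi$ in the dual; (ii) that $\phi$ restricted to $\Delta$ is Borel measurable (it is continuous) and that the integral is well defined, which is again part of the integral representation hypothesis, the bound $\phi\leq\alpha$ on $\Delta$ only being used afterwards to estimate it; and (iii) that the separation can be taken strict, which relies on local convexity of $\Omega$ and compactness of $\{x\}$. I do not expect any genuine obstacle: this is the classical ``a barycenter lies in the closed convex hull of the support'' lemma, and once the separation is in place the proof is two lines.
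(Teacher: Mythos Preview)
Your proposal is correct and follows essentially the same route as the paper's proof: argue by contradiction, separate $x$ from $\overline{\textrm{co}}(\Delta)$ via Hahn--Banach, then integrate the separating functional against the representing measure $\nu$ to obtain $\phi(x)\leq\alpha<\phi(x)$. The paper's version is slightly more terse (it writes the bound via a sublevel set $Z$ containing $\Delta$), but the idea is identical.
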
 
\begin{proof}
    Let $K=\overline{\textrm{co}}(\Delta)$. We suppose that there exists $x_0 \in \mathcal{A}\backslash K$. By applying the Hahn-Banach separation theorem, there exists a continuous linear map $l:\Omega\rightarrow\mathbb{R}$, such that $\sup_{x\in K} l(x) < C < l(x_0)$, for some real $C$. The lower level set 
    \[ Z=\{x\in\mathcal{A}\ | \ l(x)\leq C\}\]
    obviously contains $\Delta$. Let $\nu_0$ be the representative measure of $x_0$, supported on $\Delta$ so that $\nu_0(Z) =1$. Then,
    \[l(x_0)=\int_Z l d\nu_0 \leq C < l(x_0)\ , \]
    leading to a contradiction.
\end{proof}

The next Lemma expresses the supremum of a quasi-convex function on the closed convex hull of some subset \cite{bereanu_quasi-convexity_1972}.
\begin{lemma}
    Let $\mathcal{A}$ be a convex set of a locally convex topological vector space. And let $f:\mathcal{A}\rightarrow\mathbb{R}$ be a quasi-convex lower semicontinuous function. Let $Y$ be a subset of $\mathcal{A}$ and ${\overline{\textnormal{co}}(Y)}$ its closed convex hull. Then 
    \[ \sup_{\overline{\textnormal{co}}({Y})} f(x) = \sup_{{Y}} f(x)\ , \]
    \label{lem: supremum quasiconvex function on convex hull}
\end{lemma}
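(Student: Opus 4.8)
The statement to prove is that for a quasi-convex lsc function $f$ on a convex set $\mathcal{A}$ and any subset $Y \subseteq \mathcal{A}$, one has $\sup_{\overline{\textrm{co}}(Y)} f = \sup_Y f$. Since $Y \subseteq \overline{\textrm{co}}(Y)$, one inequality ($\sup_Y f \leq \sup_{\overline{\textrm{co}}(Y)} f$) is immediate. The work is in the reverse inequality. Set $M = \sup_Y f$; if $M = +\infty$ there is nothing to prove, so assume $M < \infty$. I want to show $f(x) \leq M$ for every $x \in \overline{\textrm{co}}(Y)$.

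The plan is to exploit quasi-convexity first on finite convex combinations, then use lsc to pass to the closure. First I would show that the sublevel set $L_M = \{x \in \mathcal{A} : f(x) \leq M\}$ contains $\textrm{co}(Y)$: by definition of quasi-convexity, $f(\lambda x + (1-\lambda)y) \leq \max\{f(x), f(y)\} \leq M$ for $x, y \in Y$, and an easy induction extends this to arbitrary finite convex combinations $\sum_{k=1}^n \lambda_k y_k$ with $y_k \in Y$ — indeed if $z = \sum_{k=1}^{n} \lambda_k y_k$ with $\lambda_n < 1$, write $z = \lambda_n y_n + (1-\lambda_n) z'$ where $z' = \sum_{k=1}^{n-1} \frac{\lambda_k}{1-\lambda_n} y_k \in \textrm{co}(Y)$ lies in $L_M$ by induction, and apply quasi-convexity once more. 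Hence $\textrm{co}(Y) \subseteq L_M$. Then, because $f$ is lsc, $L_M = \{x : f(x) \leq M\}$ is closed in $\mathcal{A}$ (this is exactly the sublevel-set characterization of lsc recalled in the introduction). Taking closures, $\overline{\textrm{co}}(Y) = \overline{\textrm{co}(Y)} \subseteq \overline{L_M} = L_M$, so $f(x) \leq M$ for all $x \in \overline{\textrm{co}}(Y)$, which gives $\sup_{\overline{\textrm{co}}(Y)} f \leq M = \sup_Y f$ and completes the proof.

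One small subtlety to be careful about: I am implicitly using that $L_M$, being a sublevel set of a quasi-convex function, is itself convex, so that the argument is internally consistent, though strictly speaking the proof above only needs $\textrm{co}(Y) \subseteq L_M$ and the closedness of $L_M$. Another point worth a line is that $\overline{\textrm{co}}(Y)$ is by definition the closure of $\textrm{co}(Y)$, so no separate argument about convexity of the closure is needed.

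The main (and really only) obstacle is the induction step establishing $\textrm{co}(Y) \subseteq L_M$; everything else is a direct application of the lsc $\Leftrightarrow$ closed-sublevel-sets equivalence. This is the natural proof, and it requires only the hypotheses stated (quasi-convexity and lower semicontinuity); no compactness or local convexity is used here, even though the ambient space is assumed locally convex in the statement (that hypothesis is needed elsewhere, e.g. in Lemma~\ref{lem: krein milman barycentric representation}, not here).
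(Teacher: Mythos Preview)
Your proof is correct and follows essentially the same approach as the paper: both use the sublevel set $L_M=\{x:f(x)\leq M\}$ (called $Z_a$ in the paper), argue it is closed by lower semicontinuity, and conclude $\overline{\textnormal{co}}(Y)\subset L_M$. The only cosmetic difference is that the paper skips your induction entirely by observing directly that $L_M$ is convex (quasi-convexity) and closed (lsc), hence contains $\overline{\textnormal{co}}(Y)$ by minimality of the closed convex hull --- a shortcut you in fact mention yourself in your closing remarks.
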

\begin{proof}
    If  $\sup_{{Y}} f(x) = \infty$, there is nothing to prove. So, we may assume that $a := \sup_{{Y}} f(x)$ is finite. Let $Z_a=\{x\in\mathcal{A} \ | \ f(x) \leq a \}$. \\
    Obviously, we have $Y\subset Z_a$. But $Z_a$ is convex as $f$ is quasi-convex. Further, it is closed as $f$ is lower semicontinuous. Therefore, we have $\overline{\textrm{co}}({Y}) \subset Z_a$ because of the minimal property of the closed convex hull. Hence,
    \[\sup_{\overline{\textrm{co}}({Y})} f(x) \leq  \sup_{Z_a} f(x) \leq a = \sup_{{Y}} f(x)\ , \]
    The converse is obvious.
\end{proof}

It is remarkable that we assume the lower semicontinuity of the function to maximize. In contrast, the upper semicontinuity required in the Bauer maximum principle is a more standard assumption for function maximization. The proof of Lemma \ref{lem: supremum quasiconvex function on convex hull} clarifies how our assumption acts. Indeed, the lower semicontinuity is used to enforce the closure of the set $Z_a = \{ x\in\mathcal{A} \ | \ f(x)\leq \sup_Y f(x) \}$. This argument differs from Choquet's demonstration of the Bauer maximum principle \cite[p.102]{choquet_lectures_1969}. In the latter,  the author studies the closure of the set $\{ x\in\mathcal{A} \ | \ f(x) =  \sup_Y f(x) \}$ for an upper semicontinuous function $f$ on a compact space. Doing so, the assumptions of compactness and upper semicontinuity in the Bauer maximum principle are used in order to show that the optimum of the function $f$ is reached. This is not needed here. 

From Lemma \ref{lem: krein milman barycentric representation} and \ref{lem: supremum quasiconvex function on convex hull}, we establish the next Theorem. It is analogous to the Bauer maximum principle, where the assumption of compactness is replaced by that of integral representation. The integral representation is always satisfied on compact sets, thanks to the Choquet representation theorem \cite[p.153]{choquet_lectures_1969}. So that the next theorem is analogous to the Bauer maximum principle under a compactness assumption. Hence, our integral representation assumption is, in a way, more general.
\begin{theorem}
    Let $\mathcal{A}$ be a convex subset of a locally convex topological vector space $\mathcal{X}$. We assume that any point $x\in\mathcal{A}$ is the barycenter of a probability measure $\nu$ supported on $\Delta \subset \mathcal{A}$. Let $f: \mathcal{A}\rightarrow\mathbb{R}$ be a quasi-convex lower semicontinuous function. 
    Then \[ \sup_{x\in\mathcal{A}} f(x) = \sup_{x\in\Delta} f(x) \ .\]
    \label{th: generalized non compact bauer representation}
\end{theorem}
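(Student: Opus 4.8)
The plan is to deduce the statement directly from the two preceding lemmas, with $\Delta$ playing the role of the generating subset. One inequality is free: since $\Delta\subseteq\mathcal{A}$, we have $\sup_{x\in\Delta}f(x)\le\sup_{x\in\mathcal{A}}f(x)$, so the whole content is the reverse inequality.

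For that, I would first invoke Lemma~\ref{lem: krein milman barycentric representation}: by hypothesis every $x\in\mathcal{A}$ is the barycenter of a probability measure supported on $\Delta\subseteq\mathcal{A}$, so the lemma yields $\mathcal{A}\subseteq\overline{\mathrm{co}}(\Delta)$. Then I would apply Lemma~\ref{lem: supremum quasiconvex function on convex hull} with the subset $Y=\Delta$: because $f$ is quasi-convex and lower semicontinuous, $\sup_{\overline{\mathrm{co}}(\Delta)}f=\sup_{\Delta}f$. Chaining the two facts gives $\sup_{x\in\mathcal{A}}f(x)\le\sup_{x\in\overline{\mathrm{co}}(\Delta)}f(x)=\sup_{x\in\Delta}f(x)$, and combining with the trivial inequality finishes the proof.

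The only point that deserves a moment's care is that $f$ is a priori defined only on $\mathcal{A}$, while $\overline{\mathrm{co}}(\Delta)$ may be strictly larger, so Lemma~\ref{lem: supremum quasiconvex function on convex hull} cannot be quoted verbatim on $\overline{\mathrm{co}}(\Delta)$. This is harmless: one simply reruns its argument inside $\mathcal{A}$. Set $a:=\sup_{\Delta}f$ (if $a=\infty$ there is nothing to prove) and $Z_a:=\{x\in\mathcal{A}:f(x)\le a\}$, which is convex by quasi-convexity, closed by lower semicontinuity, and contains $\Delta$. The Hahn--Banach separation argument from the proof of Lemma~\ref{lem: krein milman barycentric representation}, applied with $Z_a$ in place of $\overline{\mathrm{co}}(\Delta)$, shows that the barycenter of any probability measure carried by $\Delta$ must lie in $Z_a$; hence every $x\in\mathcal{A}$ satisfies $f(x)\le a$, i.e.\ $\sup_{\mathcal{A}}f\le a$. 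I do not expect any genuine obstacle here: the theorem is essentially just the conjunction of the separation argument of Lemma~\ref{lem: krein milman barycentric representation} and the sublevel-set trick of Lemma~\ref{lem: supremum quasiconvex function on convex hull}, repackaged as a Bauer-type maximum principle in which compactness has been replaced by the integral-representation hypothesis.
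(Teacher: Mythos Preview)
Your proposal is correct and follows exactly the paper's approach: apply Lemma~\ref{lem: krein milman barycentric representation} to obtain $\mathcal{A}\subset\overline{\mathrm{co}}(\Delta)$, then Lemma~\ref{lem: supremum quasiconvex function on convex hull} with $Y=\Delta$ to get $\sup_{\overline{\mathrm{co}}(\Delta)}f=\sup_{\Delta}f$, and combine with the trivial inequality. Your additional remark on the domain mismatch between $\mathcal{A}$ and $\overline{\mathrm{co}}(\Delta)$ is a genuine subtlety that the paper's proof glosses over, and your fix via the sublevel set $Z_a$ together with the Hahn--Banach separation argument is exactly the right way to make the chain of inequalities rigorous.
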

\begin{proof}
    From Lemma \ref{lem: krein milman barycentric representation} \[\mathcal{A}\subset\overline{\textrm{co}}(\Delta)\ .\]
    Then applying Lemma \ref{lem: supremum quasiconvex function on convex hull} on the lower semicontinuous quasi-convex function $f$, we obtain
    \[\sup_{\mathcal{A}} f(x) \leq \sup_{\overline{\textrm{co}}(\Delta)} f(x) = \sup_{\Delta}f(x)\ .\]
    The converse inequality holds obviously.
\end{proof}

We may relate our result to the one of Vesely \cite{vesely_jensen_2017}, who proves that Jensen's integral inequality remains true for a lower semicontinuous convex function on a convex set in a locally convex topological vector space.
Indeed, given a probability measure $\nu$ supported on $\Delta$ with barycenter $x_\nu$ and a convex \textit{lsc} function $f$, we get thanks to Jensen's inequality:
\[ f(x_\nu) = f\left(\int_{\Delta} x d\nu\right) \leq \int_{\Delta} f(x) d\nu \leq \sup_{\Delta} f(x)\ . \] 
Therefore, Theorem \ref{th: generalized non compact bauer representation} can be seen as some extremal version of Jensen's integral inequality. Moreover, it extends to quasi-convex functions which are more general than convex functions.

\subsection{Extension to Product spaces}
The following Theorem shows that the optimum of a quasi-convex lower semicontinuous function on a product space may be computed on the generator of the optimization set.
\begin{theorem}
    Let $\mathcal{A}_i$ be a convex subset of a locally convex topological vector space $\Omega_i,\, 1\leq i\leq n$. We assume that any point $x_i\in\mathcal{A}_i$ is the barycenter of some probability measure $\nu_i$ supported on $\Delta_{i} \subset \mathcal{A}_i$. We equip $\mathcal{A}:=\prod_{i=1}^n \mathcal{A}_i$ with the product topology, we suppose further that $f: \mathcal{A}\rightarrow\mathbb{R}$ is marginally quasi-convex lower semicontinuous. 
    Then \[ \sup_{\substack{{x_i\in\mathcal{A}_i} \\ {1\leq i\leq n}}} f((x_1, \dots, x_n)) = \sup_{\substack{{x_i\in\Delta_{i}} \\ {1\leq i\leq n}}} f((x_1, \dots, x_n))\ . \]
    \label{th: generalized non compact product bauer representation}
\end{theorem}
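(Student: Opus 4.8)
The plan is to prove the statement by induction on the number $n$ of factors, peeling off one coordinate at a time and invoking Theorem~\ref{th: generalized non compact bauer representation} (the single-space, non-compact Bauer representation) on that coordinate. The base case $n=1$ is precisely Theorem~\ref{th: generalized non compact bauer representation}, so nothing is needed there.

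For the inductive step, fix $(x_2,\dots,x_n)\in\prod_{i=2}^n\mathcal{A}_i$. By marginal quasi-convexity and marginal lower semicontinuity, the partial function $x_1\mapsto f(x_1,x_2,\dots,x_n)$ is quasi-convex and lower semicontinuous on $\mathcal{A}_1$; since every point of $\mathcal{A}_1$ is the barycenter of a probability measure supported on $\Delta_1$, Theorem~\ref{th: generalized non compact bauer representation} gives
\[ \sup_{x_1\in\mathcal{A}_1} f(x_1,x_2,\dots,x_n)=\sup_{x_1\in\Delta_1} f(x_1,x_2,\dots,x_n). \]
Taking the supremum over $(x_2,\dots,x_n)$ and using that iterated suprema may be reordered freely, $\sup_{a,b}g=\sup_a\sup_b g=\sup_b\sup_a g$, one obtains
\[ \sup_{\substack{x_i\in\mathcal{A}_i\\ 1\le i\le n}} f(x_1,\dots,x_n)=\sup_{x_1\in\Delta_1}\ \sup_{\substack{x_i\in\mathcal{A}_i\\ 2\le i\le n}} f(x_1,\dots,x_n). \]

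Next I would apply the induction hypothesis to the inner supremum. For each fixed $x_1\in\Delta_1\subset\mathcal{A}_1$ the function $(x_2,\dots,x_n)\mapsto f(x_1,x_2,\dots,x_n)$ is marginally quasi-convex and marginally lower semicontinuous on $\prod_{i=2}^n\mathcal{A}_i$ — the marginal property of $f$ is required to hold for \emph{every} choice of the frozen coordinates, in particular when the first one lies in $\Delta_1$. Hence by the induction hypothesis its supremum over $\prod_{i=2}^n\mathcal{A}_i$ equals its supremum over $\prod_{i=2}^n\Delta_i$. Substituting and reordering the suprema once more yields
\[ \sup_{\substack{x_i\in\mathcal{A}_i\\ 1\le i\le n}} f(x_1,\dots,x_n)=\sup_{x_1\in\Delta_1}\ \sup_{\substack{x_i\in\Delta_i\\ 2\le i\le n}} f(x_1,\dots,x_n)=\sup_{\substack{x_i\in\Delta_i\\ 1\le i\le n}} f(x_1,\dots,x_n), \]
which is the claim; the reverse inequality is immediate since $\prod_i\Delta_i\subset\prod_i\mathcal{A}_i$.

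The argument is essentially bookkeeping: the only non-formal ingredient is the single-space Theorem~\ref{th: generalized non compact bauer representation}, and everything else is the elementary reordering of suprema. The one point deserving care — and where a careless induction could fail — is checking that after freezing a coordinate in $\Delta_i$ rather than in $\mathcal{A}_i$ the remaining partial function still inherits marginal quasi-convexity and marginal lower semicontinuity; this follows at once from the definition of \emph{marginally quasi-convex (lsc)}, which quantifies over all admissible values of the other coordinates, but it is the step that actually closes the induction. Note that no compactness of the $\mathcal{A}_i$ nor any joint continuity of $f$ is used; the product topology enters only through the (already recorded) fact that global lower semicontinuity implies marginal lower semicontinuity.
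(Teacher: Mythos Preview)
Your proof is correct and follows essentially the same approach as the paper: peel off one coordinate, apply the single-space Theorem~\ref{th: generalized non compact bauer representation} to the resulting partial map, reorder the iterated suprema, and repeat. The paper writes out only the case $n=2$ and asserts that the general case follows the same lines, whereas you have packaged the iteration as a formal induction; your added remark that freezing a coordinate in $\Delta_i$ still leaves the remaining partial function marginally quasi-convex and lsc is a useful clarification, but otherwise the two arguments coincide.
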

\begin{proof}
    For the sake of simplicity, we only develop the case where $i=2$. The general case follows the same lines. Then, 
    \[ \sup_{\substack{{x_1\in\mathcal{A}_1} \\ {x_2\in\mathcal{A}_2}}} f((x_1,x_2)) = \sup_{x_1\in\mathcal{A}_1} \sup_{x_2\in\mathcal{A}_2} f((x_1,x_2))\ . \]
    Now, the map $x_2\mapsto f((x_1,x_2))$ is a quasi-convex lower semicontinuous function, by applying Theorem \ref{th: generalized non compact product bauer representation}, it follows that 
    \[\sup_{x_2\in\mathcal{A}_2} f((x_1,x_2)) = \sup_{x_2\in\Delta_2} f((x_1,x_2)), \text{ for every } x_1\in\mathcal{A}_1\ .\]
    Therefore, 
    \begin{IEEEeqnarray*}{rCl} 
      \sup_{x_1\in\mathcal{A}_1} \sup_{x_2\in\mathcal{A}_2} f((x_1,x_2)) & = &  \sup_{x_1\in\mathcal{A}_1} \sup_{x_2\in\Delta_2} f((x_1,x_2)) , \\
      & = &  \sup_{x_2\in\Delta_2}  \sup_{x_1\in\mathcal{A}_1}f((x_1,x_2)) , \\
      & = & \sup_{x_2\in\Delta_2}  \sup_{x_1\in\Delta_1} f((x_1,x_2)) ,
    \end{IEEEeqnarray*}
    by applying the same reasoning to $x_1\mapsto f((x_1,x_2))$.
\end{proof}
\begin{remark}
    If a function is upper semicontinuous and quasi-concave, the same reduction applies to the minimum. 
\end{remark}

\subsection{Proof of the main result}
\label{subsec: proof of main result}
Theorem \ref{th: reduction theorem on product measure space} is slightly more complicated than Theorem \ref{th: generalized non compact product bauer representation}, so that we detail its proof. Let $\mathcal{X}, \mathcal{A}$ and $\Delta$ be as defined in \Cref{subsec: construction of the product measure spaces}. 
\begin{proof}[Proof of Theorem \ref{th: reduction theorem on product measure space}] 
  We recall that $\mathcal{A}:=\prod_{i=1}^d \mathcal{A}_i$ is a product of measure spaces, where $\mathcal{A}_i$ is either a moment space or a unimodal moment space. Therefore, each measure $\mu_i \in\mathcal{A}_i$ satisfies $N_i$ moment constraints. More precisely, for measurable functions $\varphi^{(j)}_i:X_i\rightarrow \mathbb{R}$, we have $\mathbb{E}_{\mu_i}[\varphi^{(j)}_i]\leq 0$ for $1\leq j\leq N_i$ and $1\leq i\leq d$. 
  
  Moreover, in Theorem \ref{th: reduction theorem on product measure space}, we also enforce constraints on the product measure $\mu=\mu_1 \otimes \dots \otimes \mu_d \in\mathcal{A}$, such that, for measurable functions $\varphi^{(j)}:\mathcal{X}\rightarrow \mathbb{R}$, $1\leq j\leq N$, we have $\mathbb{E}_\mu[\varphi^{(j)}] \leq 0$. 

  Let $f$ be a marginally quasi-convex lower semicontinuous function. Then,
  \[\sup_{\substack{\mu_1,\dots ,\mu_d \in \otimes\mathcal{A}_i \\ \mathbb{E}_{\mu_1,\dots,\mu_d}[\varphi^{(j)}] \leq 0 \\ 1\leq j\leq N}} f(\mu)  = \sup_{\mu_1\in \mathcal{A}_1} \dots \sup_{\mu_{d-1} \in\mathcal{A}_{d-1}} \sup_{\substack{\mu_d \in \mathcal{A}_d \\ \mathbb{E}_{\mu_1,\dots,\mu_d}[\varphi^{(j)}] \leq 0 \\ 1\leq j\leq N}} f(\mu_1, \dots, \mu_d)\ .\]
  Now, for fixed $\mu_1, \dots \mu_{d-1}$, we have that 
  \[\mathbb{E}_{\mu_1,\dots,\mu_d}[\varphi^{(j)}] = \mathbb{E}_{\mu_d}\left[\mathbb{E}_{\mu_1,\dots,\mu_{d-1}}[\varphi^{(j)}_i]\right] \leq 0 \ ,\]
  for $1\leq j\leq N$, which are moment constraints on the measure $\mu_d$. This means that there are in total $(N_d + N)$ moment constraints enforced on $\mu_d$. Therefore, $\mu_d$ has an integral representation supported on the set of convex combination of $N_d+N+1$ extreme points (which are either Dirac masses, or uniform distributions). Hence, for fixed $\mu_1,\dots, \mu_{d-1}$, and because the function $\mu_d \longmapsto f(\mu_1, \dots, \mu_d)$ is quasi-convex and lower semicontinuous. We have from Theorem \ref{th: generalized non compact bauer representation} 
  \[\sup_{\substack{\mu_d \in \mathcal{A}_d \\ \mathbb{E}_{\mu_1,\dots,\mu_d}[\varphi^{(j)}] \leq 0 \\ 1\leq j\leq N}} f(\mu_1, \dots, \mu_d) = \sup_{\substack{\mu_d \in \Delta_{N_d+N} \\ \mathbb{E}_{\mu_1,\dots,\mu_d}[\varphi^{(j)}] \leq 0 \\ 1\leq j\leq N}} f(\mu_1, \dots, \mu_d) .\]
  So that,
  \[\sup_{\substack{\mu_1,\dots ,\mu_d \in \otimes\mathcal{A}_i \\ \mathbb{E}_{\mu_1,\dots,\mu_d}[\varphi^{(j)}] \leq 0 \\ 1\leq j\leq N}} f(\mu)  = \sup_{\substack{\mu_1,\dots ,\mu_d \in \mathcal{A}_1\otimes\dots \otimes \mathcal{A}_{d-1} \otimes \Delta_d \\ \mathbb{E}_{\mu_1,\dots,\mu_d}[\varphi^{(j)}] \leq 0 \\ 1\leq j\leq N}} f(\mu) \ . \]
  Consequently, the last component of $\mu$ can be replaced by some element of $\Delta_{N_i+N}$. By repeating this argument to the other components, the result follows.
\end{proof}

\section{Conclusion}
\label{sec: conclusions}

We have studied the optimization of a quasi-convex lower semicontinuous function over a set of product of measure spaces $\mathcal{A}=\prod_{i=1}^{d} \mathcal{A}_i$. Specific product measures sets are studied: the product of moment classes or unimodal moment classes. In those classes, we have an integral representation on the extreme points $\Delta_i$, that are either finite mixture of Dirac masses or finite mixture of uniform distributions. This integral representation can be seen as a non-compact form of the Krein-Milmann theorem. We have shown that the optimization of a quasi-convex lower semicontinuous function on the product space $\mathcal{A}$ is reduced to the $d$-fold product of finite convex combination of extreme points of $\mathcal{A}_i$.

This powerful Theorem provides numerous industrial applications. We develop for example the optimization of the quantile of the output of a computer code whose input distributions belong to measure spaces \cite{stenger_optimal_2020}. We also highlight through several illustrated applications how our framework generalizes both Optimal Uncertainty Quantification \cite{owhadi_optimal_2013} and robust Bayesian analysis \cite{insua_robust_2000,berger_robust_1990}.

Although, we have an explicit representation of the extreme points, the optimization is not obvious because of the high number of \textit{generalized} moment constraints enforced. In \cite{stenger_optimal_2020}, the authors propose an original parameterization of the problem in the presence of \textit{classical} moment constraints, allowing fast computation of the quantities of interest presented in \Cref{sec: applications}.

The product of measure spaces reflects the mutual independence of the model parameters. In case of a dependence structure, the Lasserre hierarchy of relaxation in semidefinite programming \cite{lasserre_moments_2010} provides an alternative solution for practical optimization.

\appendix
\section{Proof of Theorem \ref{th: extreme points of unimodal set}} 
\begin{proof}
  The proof is quite technical and as it is not the main topic of our paper, details are kept to the bare minimum. We mainly gather different results to prove our point, so that the interested reader might refer to it. Letting $X$ be an interval of $\mathbb{R}$, $\mathcal{H}_a(X)$ is the set of all probability measures on $x$ which are unimodal at $a\in X$. From \cite[p.19]{bertin_unimodality_1997} we now that $\mathcal{H}_a(X)$ is a simplex, meaning that every probability measure in $\mathcal{H}_a$ is the barycenter of a unique probability measure supported on $\mathcal{U}_a(X)$. Choquet \cite[p.160]{choquet_lectures_1969} defines another type of simplex, named Choquet simplex. A convex subset $K$ of a locally convex topological vector space is called a Choquet simplex if and only if the cone $\tilde{K} = \{(\lambda x, \lambda): x\in K, \lambda >0\}$ is a lattice cone in its own order (that is, the vector space $\textrm{span}(\tilde{K})$ is a lattice when its positive cone is taken to be $\tilde{K}$). The important point is that these two definitions are connected, and it holds from \cite[p.47]{Winkler_simplices_1985} that each simplex is a Choquet simplex. Moreover, in finite dimensional compact sets these two definitions are equivalent. Therefore, $\mathcal{H}_a(X)$ is also a Choquet simplex. We now define
  \begin{IEEEeqnarray*}{rCl}
    K & = & \{ \mu\in\mathcal{H}_a(X): \varphi_1,\dots, \varphi_n $ are $ \mu$-integrable$\}\ , \\
    F(\mu) & = &    \left(\int \varphi_1\, d\mu, \dots, \int\varphi_n \, d\mu\right)\ , \\
    W & = & F[K]\cap \prod_{i=1}^n ]-\infty, 0] $ or $W=F[K]\cap \{(0,\dots, 0)\} \ .
  \end{IEEEeqnarray*}
  It is already known that the extreme points set of $\mathcal{H}_a(X)$ is precisely $\mathcal{U}_a(X)$ as shown in \cite[p.19]{bertin_unimodality_1997}. Indeed, a classical result due to Khintchine \cite{khintchine1938unimodal} states that uniform distributions constitute the \textit{elementary units} of the set of all unimodal probability measures. However, we wish to known how the extreme points of $\mathcal{H}_a(X)$ also characterize the extreme points of a convex subset of $\mathcal{H}_a(X)$ defined as $\mathcal{A}^\dagger= F^{-1}[W] = \{\mu\in\mathcal{H}_a(X) \ | \ \mathbb{E}_\mu[\varphi_i]\leq 0, \ 1 \leq i \leq n\}$. Consequently, we wish to apply \cite[Proposition 2.1]{winkler_extreme_1988} which states that the set $\mathcal{A}^\dagger$ satisfies
  \begin{equation}
    \label{eq: appendix extreme points unimodal}
    \text{ex}\{\mathcal{A}^\dagger\} \subset \Delta^\dagger(n)  = \left\lbrace \mu\in\mathcal{A}^\dagger \ | \ \mu = \sum_{i=1}^{n+1} \omega_i \mathfrak{u}_i, \omega_i\geq 0 , \mathfrak{u}_i\in\mathcal{U}_a(X)\right\rbrace\ .
  \end{equation}
  However, in order to apply  \cite[Proposition 2.1]{winkler_extreme_1988} it remains to check that $K$ is linearly compact, meaning each of its line meets $K$ in a compact interval. By the main theorem in \cite{kendall_simplexes_1962}, it is sufficient to show that $\mathbb{R}_+\cdot K$ is a lattice cone in its own order. Indeed, condition ($2^0$) in the main theorem is an equivalent formulation of linear compactness as shown in the same reference on p.369. Of course, the cone $\mathbb{R}_+\cdot \mathcal{H}_a(X)$ is a lattice cone in its own order because it is a Choquet simplex. Now, choose $\mu\in\mathbb{R}_+\cdot K$ and $\nu\in \mathbb{R}_+\cdot \mathcal{H}_a(X)$ such that $(\mu-\nu) \in \mathbb{R}_+\cdot K$, then $\nu\in\mathbb{R}_+\cdot K$ since 
  \[\int |\varphi_i|\, d\nu\leq \int |\varphi_i| \,d\mu \quad \text{ for every } i=1,\dots, n. \]
  Hence, $\mathbb{R}_+\cdot K$ is a hereditary subcone of $\mathbb{R}_+\cdot \mathcal{H}_a(X)$ and consequently a lattice cone in its own order. This proves that $K$ is linearly compact and that \cite[Proposition 2.1]{winkler_extreme_1988} applies. As stated, it follows that the set $\mathcal{A}^\dagger = F^{-1}[W] = \{\mu\in\mathcal{H}_a(X) \ | \ \mathbb{E}_\mu[\varphi_i]\leq 0, \ 1 \leq i \leq n\}$ satisfies Equation \eqref{eq: appendix extreme points unimodal}.

  Now that the extreme points of $\mathcal{A}^\dagger$ are classified and observing that this set is closed with respect to the weak topology; Corollary 1 in \cite{weizsacker_integral_1979} concludes that every measure in $\mathcal{A}^\dagger$ has an integral representation supported on $\Delta^\dagger(n)$.
\end{proof}
\vspace{0.4cm}
\noindent\textbf{Acknowledgement.} We are grateful to the two reviewers who greatly help to improve the manuscript. Support from the ANR-3IA Artificial and Natural Intelligence Toulouse
Institute is gratefully acknowledge.

\bibliographystyle{siamplain}
\bibliography{references}
\end{document}